\newtheorem{theorem}{Theorem}
\newtheorem{proposition}[theorem]{Proposition}
\newtheorem{lemma}[theorem]{Lemma}
\newtheorem{sublemma}[theorem]{Sublemma}
\theoremstyle{definition}
\newtheorem*{conjecture}{Conjecture}
\theoremstyle{remark}
\newtheorem{remark}{Remark}[section]
\title{Existence of absolutely continuous invariant measures for $C^1$ expanding circle maps.}
\author{Hamza Ounesli$^{1,2}$}
\date{%
    $^1$\small\textit{Scuola Internazionale Superiorie di Studi Avanzati (SISSA), Trieste, Italy.} \vspace{0.2cm}%
    $^2$\textit{Abdus Salam International Centre for Theoretical Physics (ICTP), Trieste, Italy.}\\(\textit{Email: hounesli@sissa.it/hounesli@ictp.it})\\[2ex]%
    \today
}
\begin{document}
\maketitle
\begin{abstract}
We prove that for any given modulus of continuity $\omega$ there exist (uncountably many) $C^1$ uniformly expanding maps of the circle whose derivatives have $\omega$ as an optimal modulus of continuity and which preserve an invariant probability measure equivalent to Lebesgue whose density is \( \omega\)-continuous, and also (uncountably many) $C^1$ uniformly expanding maps of the circle whose derivatives have $\omega$ as an optimal modulus of continuity which preserve Lebesgue measure. 
Moreover, we show that many of these maps, including those which preserve Lebesgue measure, have unbounded distortion. 
\end{abstract}
\section{Introduction and Statement of Results}

\subsection{Introduction and Background}
Let $E^1(\mathbb S^1)$ be the space of $C^1$ uniformly expanding maps on the circle. It is  essentially a Folklore Theorem dating back to the 1950s that if \( f\in E^1(\mathbb S^1)\) is \(C^{1+\alpha}\), i.e if the derivative is H\"older continuous, then \( f \) admits a unique ergodic invariant probability measure equivalent to Lebesgue. This result, together with the techniques involved in the proof, have led to a huge area of research and many generalizations to uniformly and non-uniformly expanding maps on manifolds of arbitrary dimension as well as to more general hyperbolic and non-uniformly hyperbolic systems. 

However, even in this simplest setting of uniformly expanding circle maps there are still open problems for maps with lower degrees of regularity.  Indeed G\'ora and  Schmitt  \cite{3}  constructed an example of a map \( f\in E^1(\mathbb S^1)\)  which does not admit any invariant probability measure absolutely continuous with respect to Lebesgue (\emph{acip}). Quas \cite{2} then showed that this is not an isolated example by proving that \emph{generically} in the $C^1$-topology, maps in $E^{1}(\mathbb S^1)$ have no \emph{acip} and, more recently  Avila and Bochi \cite{5} even showed that  generically in the $C^1$-topology, maps in $E^{1}(\mathbb S^1)$ do not even have an absolutely continuous invariant \( \sigma\)-\emph{finite} measure.

On the more ``positive'' side, it is possible to relax  the condition on the H\"older continuity of the derivative to some extent. Recall that the \emph{modulus of continuity} of a continuous map $\rho: X \to Y$  between two metric spaces is  a continuous  map $\omega: \mathbb{R}^{+} \to \mathbb{R}^{+}$ vanishing at $0$ and satisfying 
\begin{equation}\label{eq:modulus}
d_Y(\rho(x),\rho(y)) \leq \omega(d_X(x,y)) 
\end{equation}
for every \( x, y \in X \). We say that \( \omega\) is \emph{Dini-integrable} if 
\begin{equation*}\label{eq:thm2}
    \int_{0}^{1}\dfrac{\omega(t)}{t}dt<\infty.
\end{equation*}
Notice that saying that \( \rho\) is H\"older continuous is exactly equivalent to saying that \( \rho \) has a modulus of continuity of the form $\omega(t) = Ct^\alpha$ for some $\alpha \in (0,1)$ and that this implies in particular that \( \omega \) is Dini-integrable. 
Fan and Jiang \cite{1} showed that if the derivative of  \( f\in E^1(\mathbb S^1)\) has a modulus of continuity which is Dini-integrable then  \( f \) admits a unique ergodic invariant probability measure equivalent to Lebesgue, thus extending the Folklore Theorem to a lower degree of regularity of the map. 

All the counterexamples mentioned above must therefore have modulus of continuity for the derivative which is not Dini-integrable and a natural question is whether Dini-integrability defines a precise \emph{cut-off} between \( C^1\) uniformly expanding maps which admit and which do not admit an \emph{acip}. 
In this paper we explore this ``{underground}'' world of maps in \(  E^1(\mathbb S^1)\) with very low regularity, in particular whose derivative have modulus of continuity which is not Dini-integrable. We show that for \emph{any} given modulus of continuity \(\omega\) there are (uncountably many) maps in \(  E^1(\mathbb S^1)\) whose derivative has a modulus of continuity equivalent to \( \omega\) but nevertheless still admit an \emph{acip}. In particular there is no specific \emph{cut-off} based on the regularity of the derivative, which means that other characteristics of the map somehow come into play.

\subsection{Existence of acip}
To state our results  we define the \emph{canonical modulus of continuity} of $\rho$ by
\begin{equation*}
\omega_{\rho}(t):=\sup\{|\rho(x)-\rho(y)|: d(x,y)<t\}.
\end{equation*}
Notice that 
$\omega_{\rho}$ always exists if \( X \) is compact since every continuous function is uniformly continuous. It is also easy to check that $\omega_{\rho}$ is increasing, concave, and satisfies \eqref{eq:modulus}.
We define the space of all potential moduli of continuity by  
\[
K:=\{\omega\in C^0(\mathbb{R}^{+},\mathbb{R}^{+}): \text{continuous, increasing, concave, $\omega(0)=0$}\},
\] 
and define an equivalence relation on \( K \) by letting \( \omega \simeq \tilde\omega\) if the ratio  $\omega/\tilde \omega$ is uniformly bounded above and below. Then, following \cite{3},  we  say that \( \omega \in K \) is an \emph{optimal modulus of continuity} for \( \rho\) if it is \emph{equivalent to} \( \omega_{\rho}\).

\begin{remark}
Despite its name, the optimal modulus of continuity is not unique but rather defines a class of functions of which \( \omega_{\rho}\) is, in some sense, a canonical representative and such that all the moduli in this class have essentially the same behaviour near 0. For example if \( \rho \) is H\"older continuous and its canonical modulus is \( \omega_{\rho}(t) = Ct^{\alpha}\), for some \( C, \alpha>0 \), then any optimal modulus of continuity for \( \rho \) will have the form \( \omega(t) = \tilde C t^{\alpha}\) for some \( \tilde C > 0 \). 
\end{remark}

The equivalence relation on \( K \) defined above induces an equivalence relation on the space \( E^{1}(\mathbb S^1)\) by letting \( f\sim g\) whenever \( \omega_{f'} \simeq \omega_{g'}\), i.e. whenever the corresponding canonical moduli of the derivatives \( f', g'\) are equivalent. The  equivalence classes associated to this equivalence relation are  of the form 
\[
E^{1}_{\omega}(\mathbb S^1)\coloneqq \{ f\in E^{1}(\mathbb S^1): \omega_{f'}\simeq \omega\}
\] 
for   \( \omega\in K \). Indeed, notice that for \( \omega, \tilde\omega\in K\) we have that \( E^{1}_{\omega}(\mathbb S^1) = E^{1}_{\tilde\omega}(\mathbb S^1)\) if \( \omega \simeq \tilde\omega\) and \( E^{1}_{\omega}(\mathbb S^1) \cap E^{1}_{\tilde\omega}(\mathbb S^1) = \emptyset\) otherwise. Notice that \( E^{1}_{\omega}(\mathbb S^1) \) contains a large number of maps, as specifying only the modulus of continuity of \( f'\) leaves a lot of freedom in the definition of~\( f\). We are interested in the sets
\[
\Gamma_{\omega}^{1}(\mathbb S^1)\coloneqq \{ f\in E^{1}_{\omega}(\mathbb S^1): f \text{ admits an \emph{acip} equivalent to Lebesgue}\}.
\]
By \cite{1}, as mentioned above, if \( \omega\) is Dini-integrable, and therefore in particular if \( \omega\) is H\"older continuous, every \( f \in E^{1}_{\omega}(\mathbb S^1)\) admits an \emph{acip} equivalent to Lebesgue and therefore
 \( \Gamma_{\omega}^{1}(\mathbb S^1) =  E^{1}_{\omega}(\mathbb S^1)\). On the other hand,  if \( \omega \) is not Dini-integrable then by \cite{3}  there  exist examples of \( \omega\in K\) such that 
\( E^{1}_{\omega}(\mathbb S^1) \setminus \Gamma_{\omega}^{1}(\mathbb S^1) \), and \cite{2, 5} even seem to suggest that there may be examples of \( \omega\in K \) for which \( \Gamma_{\omega}^{1}(\mathbb S^1) = \emptyset\). Our main result shows that this is not the case and that, \emph{on the contrary, \( \Gamma_{\omega}^{1}(\mathbb S^1) \neq \emptyset \) for every \( \omega\in K\)}.  Moreover, our arguments are quite constructive and  yield additional information about the possible regularities of the densities of the \emph{acip}, and in particular show that their regularity may be as low as that of \( f \) itself, i.e. have \( \omega \) as an optimal modulus of continuity, or very smooth, including cases in which \emph{Lebesgue measure itself is invariant}. 
For every   $\omega\in K$ and  \( f\in \Gamma_{\omega}^{1}(\mathbb S^1)  \), we let \( \mu_{f}\) denote the \emph{acip} equivalent to Lebesgue, let \( \rho_{f}= d\mu_{f}/dm\) denote its (continuous) density with respect to Lebesgue, and let \( \omega_{\rho_{f}}\) denote the canonical modulus of continuity of \( \rho_{f}\).

\begin{theorem}\label{thm:main}
For every   $\omega\in K$  there exists an uncountable set in 
\( \Gamma_{\omega}^{1}(\mathbb S^1)  \) for which \( \omega_{\rho_{f}}\simeq \omega\) and which can be given in a relatively explicit way, see \eqref{eq:map}.
\end{theorem}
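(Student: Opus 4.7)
The plan is to construct the map $f$ via a smooth conjugation of a linear expanding map by a carefully chosen $C^1$ diffeomorphism of the circle. Fix an integer $k\ge 2$ and let $T_k(x)=kx \bmod 1$. The idea is to design a $C^1$ orientation-preserving diffeomorphism $\phi:\mathbb{S}^1\to\mathbb{S}^1$ whose derivative has $\omega$ as an optimal modulus of continuity, and then set
\[
f:=\phi\circ T_k\circ\phi^{-1},
\]
which I expect is the formula referenced by \eqref{eq:map}. Since $T_k$ preserves Lebesgue measure $m$, the conjugate $f$ automatically preserves $\mu_f:=\phi_{*}m$, whose density with respect to Lebesgue is $\rho_f(y)=1/\phi'(\phi^{-1}(y))$. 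Provided $\phi'$ is bounded away from $0$ and $\infty$, this $\rho_f$ is continuous and bounded away from $0$, so $\mu_f$ is a probability measure equivalent to Lebesgue.

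First I would construct $\phi$ explicitly by prescribing $\phi'(x)=c_a\bigl(1+a\,\tilde\omega(x)\bigr)$, where $\tilde\omega(x):=\omega(d(x,x_0))$ is a fixed concrete function built from $\omega$ around a reference point $x_0\in\mathbb{S}^1$, the constant $c_a$ normalizes $\int\phi'=1$ so that $\phi$ descends to a circle diffeomorphism, and $a\in(0,a_0)$ is a small real parameter. Since $\tilde\omega$ is subadditive (as $\omega$ is concave and vanishes at $0$), $\omega_{\phi'}\simeq\omega$, and by taking $a_0$ small enough one ensures $\phi'$ stays in a narrow interval $[1-\delta,1+\delta]$ with $k(1-\delta)/(1+\delta)>1$, so that a direct computation gives
\[
f'(x)=k\cdot\frac{\phi'(T_k(\phi^{-1}(x)))}{\phi'(\phi^{-1}(x))}>1
\]
uniformly, proving $f\in E^1(\mathbb{S}^1)$.

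Next I would establish the regularity claims. Since $\phi$ and $\phi^{-1}$ are bi-Lipschitz and $\phi'$ is bounded above and below away from $0$, composition and quotient rules give $\omega_{f'}\lesssim \omega_{\phi'}\simeq\omega$ and $\omega_{\rho_f}\lesssim\omega_{\phi'}\simeq\omega$ (using that $\omega(Lt)\le L\omega(t)$ for $L\ge 1$ when $\omega$ is concave). The converse bounds $\omega\lesssim\omega_{f'}$ and $\omega\lesssim\omega_{\rho_f}$ are obtained by selecting pairs $(x,y)$ close to $x_0$: by the explicit form of $\phi'$, these saturate the modulus of $\phi'$, and this saturation survives in the ratio defining $f'$ and in $\rho_f$ because the contributions from $\phi'(\phi^{-1}(x))$ and $\phi'(T_k\phi^{-1}(x))$ cannot simultaneously cancel (the point $x_0$ is attained in the inverse image of only one term at a generic base point). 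Hence $\omega_{f'}\simeq\omega_{\rho_f}\simeq\omega$, as required. Finally, letting $a$ range over $(0,a_0)$ produces a continuum of maps $f_a$; distinct parameters yield non-conjugate maps, since e.g.~the Lipschitz oscillation of $\rho_{f_a}$ near $x_0$ depends monotonically on $a$, which gives the uncountability.

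The main obstacle I expect is Step~3's \emph{lower} bound on the modulus of continuity of $f'$: while the upper bound follows from soft composition estimates, showing that $\omega_{f'}\gtrsim\omega$ requires tracking the cancellations in $f'(x)-f'(y)=k[\phi'(T_k\phi^{-1}(x))\phi'(\phi^{-1}(y))-\phi'(T_k\phi^{-1}(y))\phi'(\phi^{-1}(x))]/\bigl(\phi'(\phi^{-1}(x))\phi'(\phi^{-1}(y))\bigr)$ and verifying that, for a well-chosen family of pairs $(x,y)$, the numerator is comparable to $\omega(|x-y|)$ rather than being suppressed. This is where the precise choice of $\tilde\omega$ peaked at a single point $x_0$ is crucial, as it prevents symmetric cancellation between the two occurrences of $\phi'$.
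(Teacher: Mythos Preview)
Your conjugation approach is essentially correct and would prove the theorem, but it is \emph{not} the route taken in the paper, and your guess about \eqref{eq:map} is off. The paper does not conjugate $T_k$; instead it fixes a continuous density $\rho$ on $[0,1]$ with $\omega_\rho\simeq\omega$ satisfying a few normalization constraints (Lemma~\ref{lem:rho1}), sets $g(x)=\int_0^x\rho$, and defines the two branches of $f_\rho$ explicitly by
\[
f_{\rho}(x)=
\begin{cases}
2g(x), & x\in[0,\tfrac12],\\
(g-\tfrac12 I)^{-1}\!\circ(g-\tfrac12)(x), & x\in[\tfrac12,1],
\end{cases}
\]
which is the actual content of \eqref{eq:map}. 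The proof then checks (Propositions~\ref{propo:p1}--\ref{propo:mod}) that this $f_\rho$ is a full-branch $C^1$ expanding circle map preserving $\rho\,dm$ and that $\omega_{f_\rho'}\simeq\omega$. The second branch arises as the solution of the ODE $f_2'=\dfrac{2\rho}{2\rho\circ f_2-1}$ with $f_2(\tfrac12)=0$; invariance is verified directly from the transfer-operator identity rather than inherited from $T_k$.

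What each approach buys: yours gets invariance for free and the density is transparently $\rho_f=(\phi^{-1})'$, but you must then fight for the \emph{lower} bound $\omega_{f'}\gtrsim\omega$ through a non-cancellation argument in the ratio $\phi'(T_k\phi^{-1}x)/\phi'(\phi^{-1}x)$. Your sketch for this is right in spirit and works because, with $\tilde\omega(x)=\omega(d(x,x_0))$, concavity makes $\tilde\omega$ locally Lipschitz away from $x_0$, so near a preimage $w\in T_k^{-1}(x_0)\setminus\{x_0\}$ the numerator term carries the full modulus while the denominator term is Lipschitz; you should state this Lipschitz-away-from-$x_0$ fact explicitly. The paper's approach instead bakes the modulus into the first branch via $f_1'=2\rho$, so $\omega_{f_1'}\simeq\omega$ is immediate, and the work shifts to verifying that the second branch, given only implicitly, has the same modulus; this is done by a bare-hands estimate on $f_2'=\dfrac{2\rho}{2\rho\circ f_2-1}$. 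The paper's construction also decouples the density from any conjugating map, which is what later allows the variant in Theorem~\ref{thm:lebesgue} where the preserved measure is Lebesgue itself.
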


The main point of Theorem~\ref{thm:main} is the fact that  \( \Gamma_{\omega}^{1}(\mathbb S^1) \neq\emptyset \), which means that even  maps in \( E^{1}(\mathbb S^1) \) with \emph{arbitrarily low regularity} can  admit an  \emph{acip} and also implies that distinct maps with equivalent moduli of continuity can have quite different ergodic properties. Indeed it implies that \( \Gamma_{\omega}^{1}(\mathbb S^1) \neq\emptyset \) in particular  for the specific modulus of continuity \( \omega\) of the counterexample constructed in \cite{3} which however does not admit an \emph{acip}. The additional statements about the densities of the \emph{acip} highlight the fact that \( \Gamma_{\omega}^{1}(\mathbb S^1)  \) is in fact quite a large set and that there is a remarkable flexibility in the construction of examples with various kinds of densities. The fact that \( f\in \Gamma_{\omega}^{1}(\mathbb S^1)  \) can preserve a density whose modulus of continuity is equivalent to the modulus of~\( f' \) seems quite natural but turns out to be somewhat coincidental as we show that there exists also maps  \( f\in \Gamma_{\omega}^{1}(\mathbb S^1)  \)  which preserve densities which are much more regular than that of \( f'\), even Lebesgue measure itself.

\begin{theorem}\label{thm:lebesgue}
Let $a\in(0,1)$ and let $f_1:[0,a]\to[0,1]$ be an expanding $C^1$-diffeomorphism. Then there exists a unique extension of $f_1$ to a Lebesgue-preserving full branch expanding transformation of the unit interval. This extension represents a $C^1$ map on the circle if and only if the following holds:
    \begin{equation}\label{The condition to extend a full branch to an expanding map}
        f_{1}'(0)=\dfrac{f_{1}'(a)}{f_1'(a)-1}
    \end{equation}
In particular, for every   $\omega\in K$ there exists an uncountable set in 
\( \Gamma_{\omega}^{1}(\mathbb S^1)  \) for which \( \mu_{f}\) is Lebesgue.
\end{theorem}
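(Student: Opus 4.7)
The plan breaks into two essentially independent pieces: (i) construction and characterization of the unique Lebesgue-preserving extension, yielding the algebraic condition \eqref{The condition to extend a full branch to an expanding map}; and (ii) production of an uncountable family of $f_1$'s satisfying that condition and giving rise to circle maps in $\Gamma^{1}_{\omega}(\mathbb{S}^1)$.

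For (i), write $f$ as a two-branch full branch map with $f|_{[0,a]}=f_1$ and $f|_{[a,1]}=f_2$, both orientation-preserving and surjecting onto $[0,1]$, and set $g_i:=f_i^{-1}$. Lebesgue invariance of $f$ is equivalent, via the transfer operator applied to the constant density~$1$, to the identity $g_1'(x)+g_2'(x)=1$ for a.e.~$x\in[0,1]$. Together with the boundary value $g_2(0)=a$ this forces
\[
g_2(x)=a+x-g_1(x),
\]
so $f_2=g_2^{-1}$ is uniquely determined. Since $f_1'>1$ yields $g_1'<1$ and hence $g_2'\in(0,1)$, the map $f_2$ is a $C^1$ expanding diffeomorphism from $[a,1]$ onto $[0,1]$. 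For $f$ to be $C^1$ on the circle, the left and right derivatives must match at the two identified points $a$ and $0\sim 1$. A direct computation gives $f_2'(a)=1/g_2'(0)=f_1'(0)/(f_1'(0)-1)$ and $f_2'(1)=1/g_2'(1)=f_1'(a)/(f_1'(a)-1)$, and the matching conditions $f_1'(a)=f_2'(a)$ and $f_1'(0)=f_2'(1)$ both rearrange to the single algebraic condition \eqref{The condition to extend a full branch to an expanding map}.

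For (ii), fix $a=1/2$ and seek $f_1$ with $f_1'(0)=f_1'(1/2)=2$, which is consistent with \eqref{The condition to extend a full branch to an expanding map} since $2=2/(2-1)$. Writing $f_1'(x)=2+\varphi(x)$, the task reduces to constructing uncountably many continuous $\varphi:[0,1/2]\to\mathbb{R}$ with $\varphi(0)=\varphi(1/2)=0$, $\int_0^{1/2}\varphi=0$ (so $f_1(1/2)=1$), $|\varphi|<1$ (so $f_1'>1$), and $\omega_\varphi\simeq\omega$. A concrete family is obtained by setting $\psi(x):=\omega(\min(x,1/2-x))$, fixing a smooth bump $\beta$ on $[0,1/2]$ with $\int\beta=\int\psi$, and taking $\varphi_c:=c(\psi-\beta)$ for $c$ in a small open interval. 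A short computation using the concavity and monotonicity of $\omega$ gives $|\psi(x)-\psi(y)|\leq \omega(|x-y|)$ and (taking $y=0$) $\omega_\psi(t)\geq \omega(t)$, so $\omega_\psi\simeq \omega$; since $\omega(t)\gtrsim t$ for every $\omega\in K$, the smooth correction $\beta$ contributes only at a scale dominated by $\omega$, giving $\omega_{\varphi_c}\simeq\omega$. Distinct $c$'s yield distinct maps $f_1(x)=\int_0^x(2+\varphi_c(t))\,dt$, hence an uncountable family.

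The main technical subtlety lies in verifying that the resulting circle map $f$ belongs to $E^1_{\omega}(\mathbb{S}^1)$, i.e.\ that $\omega_{f'}\simeq\omega$. On $[0,a]$ this is by construction. On $[a,1]$, one has
\[
f_2'(y)=h\bigl(f_1'(g_1(f_2(y)))\bigr),\qquad h(t):=\frac{t}{t-1},
\]
with $h$ smooth (hence bi-Lipschitz) on the compact range of $f_1'$, which is bounded away from $1$; and $g_1\circ f_2$ is a bi-Lipschitz $C^1$ diffeomorphism between intervals. Pre- and post-composition with bi-Lipschitz $C^1$ maps preserves the equivalence class of the modulus, so $\omega_{f_2'}\simeq\omega_{f_1'}\simeq \omega$. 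The continuity of $f'$ across $a$ and $0\sim 1$ guaranteed by \eqref{The condition to extend a full branch to an expanding map}, combined with the concavity of $\omega$, then upgrades this to $\omega_{f'}\simeq\omega$ on the whole circle. Combined with the unique Lebesgue-preserving extension from part (i), this yields uncountably many $f\in\Gamma^{1}_{\omega}(\mathbb{S}^1)$ with $\mu_f$ equal to Lebesgue, completing the plan.
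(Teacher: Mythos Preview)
Your argument is correct and, in fact, streamlines the paper's proof in one place. For part~(i) you integrate the transfer-operator identity $g_1'+g_2'=1$ directly to get the closed formula $g_2(x)=a+x-g_1(x)$, from which existence, uniqueness, surjectivity onto $[a,1]$, and the expansion $f_2'>1$ are all immediate; the paper instead rewrites the same identity as the implicit ODE $f_2'=\dfrac{f_1'\circ f_1^{-1}\circ f_2}{f_1'\circ f_1^{-1}\circ f_2-1}$, invokes Peano's theorem for existence, argues surjectivity by contradiction via Lebesgue invariance, and derives uniqueness separately. Your route is more elementary and self-contained. The derivation of~\eqref{The condition to extend a full branch to an expanding map} and the verification that $\omega_{f_2'}\simeq\omega$ via $f_2'=h\circ f_1'\circ(f_1^{-1}\circ f_2)$ with $h(t)=t/(t-1)$ bi-Lipschitz on the range of $f_1'$ match the paper's argument essentially verbatim.

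For part~(ii) your construction is more explicit than the paper's: where the paper simply asserts that one can take $f_1$ with optimal modulus $\omega$ satisfying~\eqref{The condition to extend a full branch to an expanding map}, you produce the concrete family $\varphi_c=c(\psi-\beta)$ with $\psi(x)=\omega(\min(x,\tfrac12-x))$. One small point to tighten: the lower bound $\omega_{\varphi_c}\gtrsim\omega$ is cleanest if you stipulate that the smooth bump $\beta$ is supported away from~$0$ (say in $[\varepsilon,\tfrac12-\varepsilon]$), so that $\varphi_c(t)-\varphi_c(0)=c\,\omega(t)$ for small $t$ and the lower bound is immediate; your phrasing ``contributes only at a scale dominated by $\omega$'' is correct when $\omega(t)/t\to\infty$ but needs this cosmetic fix to cover the Lipschitz case $\omega(t)\simeq t$ uniformly.
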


For for future reference, for every $\omega\in K$ we let  
 \[
 \Gamma_{\omega,\lambda}(\mathbb{S}^{1})\coloneqq\{f\in \Gamma^{1}_{\omega}(\mathbb{S}^1) :\text{Lebesgue measure is invariant}\} 
 \]

\subsection{Bounded and unbounded distortion}
One of the main techniques for proving the existence of an \emph{acip} is through a bounded distortion property. For  $f\in E^{1}(\mathbb S^1)$ we let  \( \{ \omega^{(n)}_{i}\}\)  denote the injectivity domains of $f^{n}$ and say that \( f \) has \emph{bounded distortion} if 
\begin{equation*}
    \mathcal{D} \coloneqq \sup_{n\geq 1}  \sup_{\omega_{i}^{(n)}}\sup_{x,y\in \omega_{i}^{(n)}}
    \log\dfrac{(f^n)'(x)}{(f^n)'(y)} < \infty.
\end{equation*}
It is possible to show that if \( \omega \) is Dini-integrable then every  $f\in E^{1}_{\omega}(\mathbb S^1)$ has  bounded distortion and therefore, since by classical arguments  bounded distortion implies the existence of an \emph{acip} equivalent to Lebesgue, this implies that  \( \Gamma_{\omega}^{1}(\mathbb S^1) =  E^{1}_{\omega}(\mathbb S^1)\), as mentioned above. If \( \omega \) is not Dini-integrable then bounded distortion cannot be guaranteed and indeed  our construction of the \emph{acip} for maps for maps $f\in E^{1}_{\omega}(\mathbb S^1)$ in this setting  does not explicitly use any distortion estimates. An interesting question therefore is whether Dini-integrability is a necessary as well as a sufficient condition for uniformly bounded distortion and, if not, whether there is actually is any underlying bounded distortion property which is   implicitly responsible for the existence of an \emph{acip} in the cases given by Theorem \ref{thm:main}. 

\begin{conjecture}
$\forall\ \omega \in K$ non Dini-integrable, unbounded distortion is $C^1$-generic in \( \Gamma_{\omega}^{1}(\mathbb S^1)\)
\end{conjecture}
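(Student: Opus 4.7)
The plan is to establish the two standard ingredients for $C^{1}$-genericity inside $\Gamma_\omega^{1}(\mathbb S^1)$: that the set of maps with unbounded distortion is a $G_\delta$ and that it is dense. For the $G_\delta$ part, fix $n\geq 1$ and $M>0$ and set
\[
D_n(f)\coloneqq \sup_{\omega_i^{(n)}} \sup_{x,y\in \omega_i^{(n)}} \log\dfrac{(f^n)'(x)}{(f^n)'(y)}.
\]
The map $f\mapsto f^n$ is $C^1$-continuous on $E^1(\mathbb S^1)$ and the endpoints of the injectivity domains (being preimages under $f^n$ of a marked point) depend continuously on $f$, so each $D_n$ is continuous. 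Thus $\{f: D_n(f)>M\}$ is open in $\Gamma_\omega^1(\mathbb S^1)$, and unbounded distortion is the $G_\delta$ set $\bigcap_{M}\bigcup_{n}\{f: D_n(f)>M\}$. For Baire category to apply, one may fix constants $c<C$ and restrict to the closed subspace $\{f:c\,\omega\leq\omega_{f'}\leq C\,\omega\}\cap\Gamma_\omega^1(\mathbb S^1)$ of $E^1(\mathbb S^1)$, then take a countable union over pairs $(c,C)$.

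The heart of the argument is density. Given $f\in\Gamma_\omega^1(\mathbb S^1)$, $\varepsilon>0$ and $M>0$, I would construct $g\in\Gamma_\omega^1(\mathbb S^1)$ with $\|g-f\|_{C^1}<\varepsilon$ and $D_N(g)>M$ for some $N$. The mechanism is the classical telescoping identity
\[
\log(g^N)'(x)-\log(g^N)'(y)=\sum_{k=0}^{N-1}\bigl(\log g'(g^k x)-\log g'(g^k y)\bigr),
\]
combined with the fact that pairs $x,y$ in an injectivity domain of $g^N$ satisfy $|g^k x-g^k y|\asymp \lambda^{-(N-k)}$ for suitable choices, where $\lambda>1$ is an expansion constant. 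Choose a backward orbit $p_0,p_1,\ldots$ of $f$ with $f(p_{k+1})=p_k$ and insert near each $p_k$ a small bump in the graph of $g'$ of spatial scale $r_k\asymp\lambda^{-k}$ and amplitude $\asymp\omega(r_k)$, with pairwise disjoint supports and locally compatible with the modulus $\omega$. Then for $x=p_N$ and $y$ near the boundary of the injectivity domain around $p_N$, each term in the telescoped sum contributes a definite multiple of $\omega(\lambda^{-(N-k)})$. Because $\omega$ is not Dini-integrable, comparison with $\int_0^1\omega(t)/t\,dt$ gives $\sum_{j\geq 1}\omega(\lambda^{-j})=\infty$, so the sum exceeds $M$ for $N$ large, while the $C^1$ size of the perturbation, controlled by $\sup_k\omega(r_k)$, can be made arbitrarily small by placing the bumps at sufficiently high levels.

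The main obstacle, and the reason the statement is left as a conjecture, is to guarantee that the perturbed map $g$ still lies in $\Gamma_\omega^1(\mathbb S^1)$, i.e.\ that $\omega_{g'}\simeq\omega$ \emph{and} $g$ admits an \emph{acip} equivalent to Lebesgue. The modulus condition is routine: disjoint bumps of scale $r_k$ and amplitude $\omega(r_k)$ preserve the upper bound $\omega_{g'}\leq C'\omega$, and the lower bound is inherited from $f$. The genuine difficulty is the \emph{acip} condition, since in the non-Dini-integrable setting there is no general perturbative stability theorem for \emph{acip}s and, by \cite{2,5}, a generic $C^1$-small perturbation in $E^1(\mathbb S^1)$ in fact destroys absolute continuity. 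A natural workaround is to restrict the density argument to the subfamily $\Gamma_{\omega,\lambda}(\mathbb S^1)$ of Theorem \ref{thm:lebesgue}, where the \emph{acip} is Lebesgue itself and is preserved as long as the perturbation respects the identity $\sum_{y\in g^{-1}(x)}1/g'(y)\equiv 1$; each local bump can be balanced by compensating bumps at sibling preimages so as to maintain this identity. Provided one can then show that $\Gamma_{\omega,\lambda}(\mathbb S^1)$ (or more broadly the parametric family arising from Theorem \ref{thm:main}) is $C^1$-dense in $\Gamma_\omega^1(\mathbb S^1)$, the genericity conclusion propagates to the whole space, since open density within a dense subset implies open density in the ambient space. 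Carrying out this compensating construction while simultaneously maintaining the modulus bounds and establishing the density of the Lebesgue-preserving family is, in my view, the technical core that makes the conjecture genuinely open.
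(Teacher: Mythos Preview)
The statement you are addressing is stated in the paper as a \emph{conjecture}, not a theorem; the paper offers no proof and explicitly says it cannot give a full answer, instead proving the weaker Theorems~\ref{thm:dinidist1} and~\ref{thm:generic} as partial evidence. So there is no ``paper's own proof'' to compare your proposal against. You already recognise this, and your identification of the core obstruction --- that a $C^{1}$-small perturbation of a map in $\Gamma_{\omega}^{1}(\mathbb S^{1})$ need not stay in $\Gamma_{\omega}^{1}(\mathbb S^{1})$ because the \emph{acip} property is unstable in the non-Dini regime --- is exactly the difficulty the paper is gesturing at.

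Two further gaps in your outline are worth flagging. First, even your $G_{\delta}$ step is incomplete as a route to genericity: you note that one can make the modulus constraint $c\,\omega\leq\omega_{f'}\leq C\,\omega$ closed, but $\Gamma_{\omega}^{1}(\mathbb S^{1})$ also carries the \emph{acip} condition, and you give no argument that this set is a Baire space in the $C^{1}$ topology (it is neither obviously open nor obviously closed in $E^{1}_{\omega}(\mathbb S^{1})$), so the Baire category theorem does not apply without more work. Second, your suggested workaround --- perturb within the Lebesgue-preserving class $\Gamma_{\omega,\lambda}(\mathbb S^{1})$ and then argue this class is $C^{1}$-dense in $\Gamma_{\omega}^{1}(\mathbb S^{1})$ --- would itself require knowing that an arbitrary map with an \emph{acip} can be $C^{1}$-approximated by a Lebesgue-preserving one \emph{within the same modulus class}, which is not established anywhere and looks at least as hard as the original conjecture.

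For comparison, the paper's actual partial result (Theorem~\ref{thm:generic}) sidesteps both issues by changing the ambient space and topology: it selects a single representative from each $E^{1}_{\omega}(\mathbb S^{1})$ lying in the explicit family $\mathcal F_{\omega}$, equips this transversal $\Gamma$ with the $C^{1+mod}$ topology, and then transfers the genericity of non-Dini-integrable moduli in $K$ (Proposition~\ref{prop:generic}) through the homeomorphism $\pi\circ\mathcal M$. This is a much softer statement than the conjecture --- genericity across modulus classes rather than within a fixed one --- and it avoids precisely the perturbative stability problem you correctly isolate.
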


While we cannot give a full answer to the conjecture we can show that many maps have an \emph{acip} despite not having bounded distortion. 
For \( \omega \in K \) we  consider a subset of the family \( \Gamma_{\omega,\lambda}(\mathbb{S}^{1})\) defined above for which the derivative has an explicit form near 0. 
 \[
 \mathcal{F}_{\omega}\coloneqq \{f\in \Gamma_{\omega,\lambda}(\mathbb{S}^{1}) :\text{$f_1'(x)=2+2\omega(x)$ on a small enough interval $[0,t_{\omega}]$}\} 
 \]
 It is clear by the statement in Theorem \ref{thm:lebesgue} that \(  \mathcal{F}_{\omega} \) is an uncountable set.  
 
\begin{theorem}\label{thm:dinidist1}
  Every map in $\mathcal{F}_{\omega}$ has bounded distortion if and only if the optimal modulus of continuity $\omega_{f'}$ of $f'$ is Dini-integrable.
\end{theorem}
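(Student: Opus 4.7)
The plan is to prove both directions separately. The easier direction — that Dini-integrability of $\omega$ implies bounded distortion for every $f$ with $\omega_{f'} \simeq \omega$ — is classical. For $x, y$ in the same injectivity domain $\omega_i^{(n)}$ of $f^n$, uniform expansion by $\lambda := \inf f' > 1$ gives $|f^j(x) - f^j(y)| \leq C\lambda^{j-n}$, and summing $|\log f'(f^j x) - \log f'(f^j y)| \lesssim \omega(|f^j x - f^j y|)$ over $j$ yields a geometric-like sum $\sum_{k \geq 1}\omega(\lambda^{-k})$, comparable by monotonicity and concavity of $\omega$ to the Dini integral, hence finite.

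For the other direction I show that if $\omega$ is not Dini-integrable, every $f \in \mathcal{F}_\omega$ has unbounded distortion. The crucial structural facts are that $0$ is a fixed point of $f$ with $f'(0) = 2$ (as forced by condition \eqref{The condition to extend a full branch to an expanding map}), and $f'(x) = 2 + 2\omega(x)$ on $[0, t_\omega]$. Shrinking $t_\omega$ so that $\omega(t_\omega) \leq 1/2$, I obtain $2y \leq f(y) \leq 3y$ on $[0, t_\omega]$. Let $\omega_0^{(n)} = [0, z_n]$ be the injectivity branch of $f^n$ adjacent to $0$, mapped onto $[0,1]$, and (for $n$ large enough that $t_\omega$ is in its image) pick $w_n \in (0, z_n)$ with $f^n(w_n) = t_\omega$. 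Writing $y_j := f^j(w_n)$ and iterating the two-sided bound backward from $y_n = t_\omega$ yields
\[
\dfrac{t_\omega}{3^{k+1}} \leq y_{n-1-k} \leq \dfrac{t_\omega}{2^{k+1}}, \qquad k = 0, 1, \ldots, n-1,
\]
so the entire forward orbit $\{y_j\}_{j=0}^{n-1}$ stays inside $[0, t_\omega]$.

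Consequently $f'(y_j) = 2(1 + \omega(y_j))$ for each such $j$, and combined with $(f^n)'(0) = 2^n$ the chain rule gives
\[
\log\dfrac{(f^n)'(w_n)}{(f^n)'(0)} \;=\; \sum_{j=0}^{n-1}\log\bigl(1 + \omega(y_j)\bigr) \;\geq\; \tfrac{1}{2}\sum_{k=0}^{n-1}\omega(y_{n-1-k}) \;\geq\; \tfrac{1}{2}\sum_{k=0}^{n-1}\omega\bigl(t_\omega/3^{k+1}\bigr),
\]
where the first inequality uses $\log(1+x) \geq x/2$ on $[0, 1/2]$ and the second uses monotonicity of $\omega$. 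The standard dyadic--integral comparison (with base $3$ in place of $2$) gives $\sum_{k \geq 1}\omega(t_\omega/3^k) \asymp \int_0^{t_\omega}\omega(t)/t\,dt$, which diverges exactly when $\omega$ is not Dini-integrable. This produces pairs $(0, w_n)$ in the same injectivity domain of $f^n$ witnessing unbounded distortion.

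The main obstacle I anticipate is really only cosmetic --- assembling the elementary backward-orbit estimates and being careful about the domain on which the explicit form of $f'$ is valid. The key design choice is to use $w_n$ with $f^n(w_n) = t_\omega$ rather than the endpoint $z_n$, which is what guarantees the entire forward orbit of $w_n$ stays inside $[0, t_\omega]$ --- essential since outside this interval the derivative has no explicit form and contributions could in principle cancel the gain coming from inside.
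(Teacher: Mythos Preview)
Your proof is correct and follows essentially the same strategy as the paper: compare the orbit of $0$ with that of $w_n = f_1^{-n}(t_\omega)$, use the explicit form $f' = 2 + 2\omega$ on $[0,t_\omega]$ to reduce the distortion to a partial sum $\sum \omega(\text{geometric sequence})$, and conclude via the equivalence of that series with the Dini integral. The only differences are cosmetic---you use the explicit local bounds $2 \le f' \le 3$ and the inequality $\log(1+x)\ge x/2$ where the paper invokes the mean value theorem and the global bound $\sigma = \max f'$, and you sketch the sum--integral comparison and the bounded-distortion direction yourself rather than citing \cite{9} and \cite{1}.
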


Finally, also in the direction  of the Conjecture above, we show that unbounded distortion is generic in a somewhat different sense. More precisely, we define on $E^{1}(S^1)$  the $C^{1+mod}$-topology induced by the  metric
\begin{equation*}
d_{1+mod}(f,g)=d_{1}(f,g)+d_{0}(\omega_{f'},\omega_{g'}),
\end{equation*}
where $d_{1}$ is the $C^{1}$ distance, and $d_{0}$ is the $C^{0}$-distance. In the distance $d_{1+mod}$, maps are close if they are $C^{1}$-close, and their moduli of continuity $\omega_{f'}$ and $\omega_{g'}$ of their derivatives are close in the $C^{0}$-topology. Notice that this is a  natural metric on the space of \( C^{1}\) maps and stronger than the usual \( C^{1}\) metric. 

\begin{theorem}\label{thm:generic}
There exists a subset $\Gamma \subset E^{1}(S^{1})$ which contains exactly one element from each equivalence class  $ E^{1}_{\omega}(S^{1})$ for \( \omega \in K \), such that  \(C^{1+mod}\) generic maps  \( f\in \Gamma\) have unbounded distortion.
\end{theorem}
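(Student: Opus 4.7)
The plan is to realise $\Gamma$ as the image of an explicit continuous section $\Phi:K_0\to E^{1}(\mathbb S^1)$, where $K_0\subset K$ is a set of canonical representatives of $K/\simeq$, and then use Theorem \ref{thm:dinidist1} to turn the ``bounded distortion'' question into a Dini-integrability question on $K_0$ that can be handled by a Baire-category argument on the space of moduli.

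First I would build $\Phi$ using the family $\mathcal F_\omega$ from Theorem~\ref{thm:lebesgue}. Fixing once and for all a base length $a\in(0,1)$ and a cut-off $t_\omega$ depending continuously on $\omega$, for each $\omega\in K_0$ I set $f_\omega'(x)=2+2\omega(x)$ on $[0,t_\omega]$, extend $f_\omega'$ to $[0,a]$ by a canonical (e.g.\ piecewise-affine) interpolation that meets the boundary relation \eqref{The condition to extend a full branch to an expanding map}, and finally extend $f_\omega$ to a Lebesgue-preserving $C^1$ expanding map of $\mathbb S^1$ as in Theorem~\ref{thm:lebesgue}. Because both $f_\omega'$ and $\omega_{f_\omega'}$ depend on $\omega$ in a Lipschitz way with respect to the $C^0$-norm (via the explicit formula $f_\omega'=2+2\omega$ on $[0,t_\omega]$), $\Phi$ is a homeomorphism from $(K_0,\|\cdot\|_{C^0})$ onto $\Gamma$ in the $d_{1+mod}$-topology; choosing $K_0$ to be a closed subset of $(K,\|\cdot\|_{C^0})$ then makes $\Gamma$ a completely metrisable, hence Baire, subspace meeting each equivalence class in exactly one point.

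By Theorem \ref{thm:dinidist1}, $f_\omega$ has bounded distortion iff $\omega_{f_\omega'}$ is Dini-integrable iff $\omega$ itself is (since $\omega_{f_\omega'}\simeq\omega$). The Dini-integrable elements of $K_0$ form the $F_\sigma$-set $\bigcup_n A_n$ with $A_n=\{\omega\in K_0:\int_0^1\omega(t)/t\,dt\le n\}$. Fatou's lemma shows each $A_n$ is $C^0$-closed, and each has empty $C^0$-interior: for any $\omega\in A_n$ and $\varepsilon>0$, adding $\varepsilon g$ with $g(t)=1/\log(1/t)$ (suitably truncated and smoothed so as to stay in $K$) yields a concave, increasing, $\varepsilon$-close perturbation—concavity is checked by a direct computation of $g''$ on a right neighbourhood of $0$—whose Dini integral $\int_0^1 g(t)/t\,dt$ diverges. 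Hence $\bigcup_n A_n$ is meagre in $K_0$ and its complement is a dense $G_\delta$; transferring this through $\Phi$ gives the desired dense $G_\delta$ of unbounded-distortion maps in $\Gamma$.

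The main obstacle I anticipate is making the choice of $K_0$ and $\Phi$ sufficiently canonical so that $\Gamma$ intersects each equivalence class in exactly one point while $K_0$ is closed in $(K,\|\cdot\|_{C^0})$. A natural recipe is to impose a normalisation such as $\omega(1)=1$ together with an additional rigidity condition pinning down a unique element per $\simeq$-class, but one must then verify that the perturbation $\omega+\varepsilon g$ can be renormalised back into $K_0$ without re-acquiring Dini-integrability and that this renormalisation is continuous in $C^0$. Checking that $A_n\cap K_0$ still has empty interior in $K_0$ after the renormalisation is the most delicate point of the argument.
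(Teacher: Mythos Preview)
Your overall strategy coincides with the paper's: pick one map from each $\mathcal F_\omega$ to form $\Gamma$, invoke Theorem~\ref{thm:dinidist1} to translate ``bounded distortion'' into ``$\omega$ Dini-integrable'', and then run a Baire argument on the space of moduli by writing the Dini-integrable ones as $\bigcup_n A_n$ with each $A_n$ closed and nowhere dense. Even the density argument (perturb $\omega$ by a small non-Dini modulus) is the same.

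Where you diverge is in how you handle the passage from $K$ to a one-per-class transversal. You try to manufacture a closed set $K_0\subset K$ of canonical representatives and a continuous section $\Phi:K_0\to\Gamma$, and you correctly flag the difficulty: the relation $\omega\simeq\tilde\omega$ is ``bounded ratio'', not ``scalar multiple'', so normalisations like $\omega(1)=1$ do not single out a representative, and there is no obvious reason why a perturbation $\omega+\varepsilon g$ renormalises back into $K_0$. The paper sidesteps this entirely by passing to the quotient $\widetilde K=K/\!\simeq$ with the quotient topology: it first proves $K_*$ is residual in $K$ (exactly your $A_n$ argument), then uses that the quotient map $\pi$ is open to conclude $\widetilde K_*=\pi(K_*)$ is residual in $\widetilde K$. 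The set $\Gamma$ is then chosen by the axiom of choice (one element of $\mathcal F_\omega$ per class), and the map $\pi\circ\mathcal M:\Gamma\to\widetilde K$, $f\mapsto[\omega_{f'}]$, is a bijection which is continuous directly from the definition of $d_{1+mod}$; the residual set is pulled back through it.

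So your plan is not wrong, but the obstacle you isolate is a genuine one and is precisely what the quotient-space device removes: you never need $K_0$ to be closed or the perturbation to stay inside $K_0$, because the meagreness is established in the full space $K$ and only then pushed to $\widetilde K$. Replacing your $K_0$ by $\widetilde K$ and your section $\Phi$ by the map $\mathcal M:f\mapsto\omega_{f'}$ composed with $\pi$ turns your sketch into the paper's proof with no further work.
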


\begin{remark}
This theorem implies, in particular, that most maps in $\Gamma$ have unbounded distortion and still preserve a continuous probability measure equivalent to Lebesgue. Such examples are rare to find in the literature. The only example we know of is the Quas example in \cite{8} where he constructed an expanding map of the circle preserving Lebesgue but not ergodic and hence has unbounded distortion.
\end{remark}
\vspace{0.3cm}

\begin{center}
\textbf{Acknowledgments.}
\end{center}

\noindent I would first like to thank my supervisor Stefano Luzzatto for his guidance and encouragement during all stages of the writing of this note. I would also like to thank Anthony Quas, Ali Tahzibi, Houssam Boukhecham, and Douglas Coates for reading early versions of this note and for their useful comments. Finally, I am thankful to Benoit Kloeckner and Houssam Boukhecham for having hosted me at UPEC, Paris, where part of this work was done.

\section{Proof of part 1 of Theorem 1}

Let $\omega\in K$, we will construct uncountably many maps in $\Gamma^1_{\omega}(\mathbb{S}^1)$ for which the density they preserve has $\omega$ as an optimal modulus of continuity. We will construct these as maps $f:\mathbb{S}^{1} \rightarrow \mathbb{S}^{1}$ of degree 2, orientation-preserving which we represent as full branch map of the unit interval $[0,1]$ with two $C^1$ branches $f_1$ and $f_2$ defined respectively on $[0,\frac{1}{2}]$ and $[\frac{1}{2},1]$ satisfying $f'_1(0)=f'_2(0)$ and $f'_{l,1}(\frac{1}{2})=f'_{r,2}(\frac{1}{2})$ where $l$ and $r$ denote the left and right derivatives at $x=\frac{1}{2}$.

\begin{figure}[H]
\centering
\includegraphics[scale=0.5]{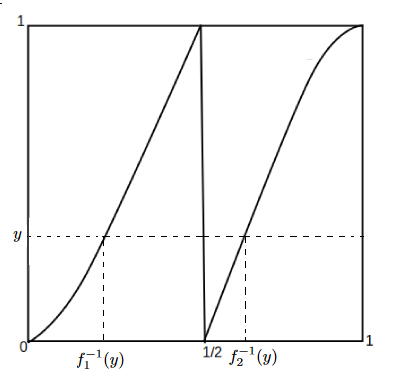}
\caption{A representation of a circle map of degree 2.}
\end{figure}

\noindent We will first give an overview of the proof and reduce it to a number of technical propositions which we will prove in the subsequent sections.
\subsection{Overview of the proof}

\noindent Our idea is to fix a continuous density satisfying certain conditions and prove that under those conditions we can construct a uniformly expanding map of the circle preserving the measure defined by that density and for which the regularity of the derivative is the same as that of the density.

\begin{lemma}\label{lem:rho1}
    For every $\omega\in K$ there exists $\rho:[0,1]\to\mathbb{R}$ continuous, having $\omega$ as an optimal modulus of continuity, strictly greater than $1\slash 2$, satisfying:
    
     \begin{equation*}
         \int_{0}^{\frac{1}{2}}\rho(t)dt=\int_{\frac{1}{2}}^{1}\rho(t)dt=\frac{1}{2},\tag{P1}
    \end{equation*}
     \begin{equation*}
        \max\limits_{[0,1]}\rho-\min\limits_{[0,1]}\rho<\frac{1}{2},\tag{P2}
    \end{equation*}
     and
    \begin{equation*}
        \rho(0)=\rho(1)=1\tag{P3}.
    \end{equation*}
 
\end{lemma}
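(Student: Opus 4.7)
The plan is to construct $\rho$ as a symmetric perturbation $\rho(x)=\rho(1-x)$ of the constant function $1$, concentrating all the low-regularity behaviour near the endpoints of $[0,1]$ and using a linear interpolation in the middle to satisfy the integral constraint. The symmetry forces the two half-integrals in (P1) to be automatically equal, so it only remains to arrange $\int_0^{1/2}\rho = 1/2$ and to check that $\omega_\rho \simeq \omega$.

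Concretely, I would fix small parameters $\epsilon,\delta>0$ (to be tuned later) and define
\[
\rho(x) = \begin{cases} 1 + \epsilon\,\omega(x), & x\in[0,\delta],\\ \ell(x), & x\in[\delta,1/2],\end{cases}
\]
where $\ell$ is the affine function with $\ell(\delta) = 1 + \epsilon\,\omega(\delta)$ and $\ell(1/2) = a$, extending by symmetry to $[1/2,1]$. Solving $\int_0^{1/2}\rho = 1/2$ for $a$ yields
\[
a = 1 - \epsilon\,\omega(\delta) - \frac{2\epsilon}{1/2-\delta}\int_0^\delta \omega(t)\,dt,
\]
which is $1+O(\epsilon)$. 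Continuity is immediate, (P3) follows from $\rho(0)=1$, and (P1) holds by construction. Taking $\epsilon$ sufficiently small gives (P2), since $\max\rho-\min\rho \leq \epsilon\,\omega(\delta) + O(\epsilon)$, and also $\min\rho > 1/2$, since $\min\rho = a$ is as close to $1$ as we wish.

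The substantive part is the equivalence $\omega_\rho\simeq\omega$. For the upper bound, $\rho$ is Lipschitz on $[\delta,1-\delta]$ with some constant $L$, and concavity of $\omega$ together with $\omega(0)=0$ makes $\omega(t)/t$ non-increasing, hence $t \leq \omega(t)/\omega(1)$ for $t\in(0,1]$. The linear piece therefore contributes at most $L\,\omega(t)/\omega(1)$ to $|\rho(x)-\rho(y)|$, which, combined with $|\rho(x)-\rho(y)|\leq\epsilon\,\omega(|x-y|)$ on $[0,\delta]$ and a triangle inequality for $x,y$ straddling a transition point, yields $\omega_\rho(t) \leq C\omega(t)$ for a constant $C$ depending on $\epsilon,\delta,\omega(1)$. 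For the lower bound, taking $y=0$ and $x=t\leq\delta$ gives $\omega_\rho(t)\geq\epsilon\,\omega(t)$ directly, while for $t\in[\delta,1]$ one combines $\omega_\rho(t)\geq\omega_\rho(\delta)\geq\epsilon\,\omega(\delta)$ with $\omega(t)\leq\omega(1)$ to conclude $\omega_\rho(t)\geq(\epsilon\,\omega(\delta)/\omega(1))\,\omega(t)$.

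The main obstacle I expect is the careful verification of the upper modulus estimate for pairs $x,y$ straddling $\delta$, where the profile of $\rho$ switches from an $\omega$-type to an affine one; this is handled by splitting $|\rho(x)-\rho(y)|\leq|\rho(x)-\rho(\delta)|+|\rho(\delta)-\rho(y)|$ and absorbing the Lipschitz contribution into an $\omega$-bound via the concavity estimate above. Once this bookkeeping is done and $\epsilon$ is chosen small enough that (P2) and $\rho>1/2$ both hold, the symmetric extension delivers a $\rho$ satisfying all the claimed properties.
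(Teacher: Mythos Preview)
Your construction is correct and follows the same underlying idea as the paper: set $\rho$ equal to $1$ plus a small multiple of $\omega$ on a short initial interval, then extend so that the remaining constraints hold. The paper's own proof is in fact only a two-line sketch---it takes $\rho(t)=1+\omega(t)$ on $[0,t_\omega]$ for $t_\omega$ small and asserts that one can extend to the rest of $[0,1]$ so that (P1)--(P3) are satisfied, without writing down an explicit extension or verifying the modulus equivalence. Your version is considerably more complete: the symmetry $\rho(x)=\rho(1-x)$ neatly forces the two half-integrals in (P1) to agree, the affine interpolation gives an explicit formula for the value $a=\rho(1/2)$ that balances the integral, and you actually carry out the two-sided estimate $\omega_\rho\simeq\omega$, including the straddling case via the concavity bound $t\le\omega(t)/\omega(1)$. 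One cosmetic difference is that you achieve smallness of the oscillation via the scaling factor $\epsilon$, whereas the paper achieves it by shrinking the interval $[0,t_\omega]$; either works. In short, you and the paper take the same approach, but your argument is the one that is fully written out.
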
\label{lem:rho}
  Now, assuming the conditions of the previous lemma, for $x\in[0,1]$ let:
  \begin{equation*}
      g(x)=\int_{0}^{x}\rho(t)dt
  \end{equation*}   
and define $f_{\rho}:[0,1]\to[0,1]$ by
\begin{equation}   \label{eq:map}
 f_{\rho}(x) =
\begin{cases}
		2g  & \mbox{if } x \in [0,\frac{1}{2}] \\
		(g-\frac{1}{2}I)^{-1}\circ (g-\frac{1}{2}) & \mbox{if } x\in[\frac{1}{2},1]
	\end{cases}
\end{equation}

We will show that the map $f_{\rho}$ is a well defined $C^1$ expanding circle map which preserves the density $\rho$ and whose derivative $f'_{\rho}$ has $\omega$ as an optimal modulus of continuity, thus proving part 1 of Theorem 1.

 We split the proof into the following propositions. First of all let $\rho:[0,1]\to\mathbb{R}$ be a continuous map such that $\rho>1/2$ and consider the following system of ordinary differential equations:

\begin{equation*}
    \begin{cases}
    f'_{1}=2\rho\ & \text{on}\ [0,\frac{1}{2}]\ \text{with}\ f_1(0)=0, \\  f'_{2}=\dfrac{2\rho}{2\rho\circ f_{2}-1}\ &\text{on}\ [\frac{1}{2},1] \text{with}\ f_2(\frac{1}{2})=0.
    \end{cases}\tag{S}
\end{equation*}

\vspace{0.1cm}

\begin{proposition}\label{propo:p1}
    If $\rho$ satisfies $(P1)$ then the system $(S)$ has a solution that defines a full branch map $f$ of the unit interval $[0,1]$ which preserves the measure $\mu$ defined by the density $\rho$.
\end{proposition}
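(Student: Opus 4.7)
The plan is to solve the system (S) explicitly by exhibiting the solution already announced by formula \eqref{eq:map}, rather than invoking any abstract ODE existence theorem. For the first branch, set $f_1(x):=2g(x)$ on $[0,1/2]$; then $f_1'=2\rho$ and $f_1(0)=0$ trivially, and (P1) gives $f_1(1/2)=2g(1/2)=1$. Because $\rho>1/2>0$ by Lemma~\ref{lem:rho1}, $f_1$ is a $C^1$ diffeomorphism of $[0,1/2]$ onto $[0,1]$.

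For the second branch the ODE is implicit, $f_2'=2\rho/(2\rho\circ f_2-1)$, so the key idea is the change of variable $h(x):=g(x)-x/2$. Since $h'=\rho-1/2>0$, the function $h$ is strictly increasing, and (P1) shows that $h(0)=0$ and $h(1)=1/2$, so $h:[0,1]\to[0,1/2]$ is a $C^1$ diffeomorphism. On $[1/2,1]$ the map $x\mapsto g(x)-1/2$ is also a $C^1$ diffeomorphism onto $[0,1/2]$, again by (P1). Defining $f_2:=h^{-1}\circ(g-1/2)$ on $[1/2,1]$ therefore yields a well-defined $C^1$ diffeomorphism $[1/2,1]\to[0,1]$ with $f_2(1/2)=0$ and $f_2(1)=1$. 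Differentiating the defining identity $h\circ f_2=g-1/2$ gives $(\rho\circ f_2-1/2)\,f_2'=\rho$, which is exactly the second equation of (S). Thus $f_1$ and $f_2$ together constitute a full branch map $f$ of $[0,1]$ solving the system.

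To show that $f$ preserves $\mu=\rho\,dm$, it suffices to verify the transfer operator fixed-point equation
\[
\rho(x)=\frac{\rho(f_1^{-1}(x))}{f_1'(f_1^{-1}(x))}+\frac{\rho(f_2^{-1}(x))}{f_2'(f_2^{-1}(x))}.
\]
The first summand equals $1/2$ identically because $f_1'=2\rho$. Using the ODE for $f_2$, the second summand simplifies to $(2\rho(x)-1)/2$, and the two add to $\rho(x)$, as required.

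I do not anticipate any serious obstacle in this proposition: the entire argument rests on spotting the substitution $h=g-I/2$, which converts the implicit ODE for $f_2$ into an explicit inverse-function formula. The only care required is to track, using (P1) and the assumption $\rho>1/2$, that the domains and ranges of $h$, of $g-1/2$, and of their composition match correctly so that $f_2$ is genuinely defined on all of $[1/2,1]$ with image the full interval $[0,1]$.
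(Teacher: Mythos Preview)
Your proposal is correct, and the construction of the solution via the substitution $h=g-I/2$ is exactly the route the paper takes. The one genuine difference is in the invariance step: the paper verifies $f_{*}\mu([0,y])=\mu([0,y])$ for all $y$ by differentiating this identity, recognising the resulting equations as the system (S), and then checking that the initial conditions $f_1(0)=0$, $f_2(1/2)=0$ recover the integrated form. You instead check the Perron--Frobenius fixed-point equation $P\rho=\rho$ directly, which is shorter and more transparent (indeed the paper itself uses this operator in Section~3). One small inaccuracy: the hypothesis $\rho>1/2$ is a standing assumption introduced just before the system (S), not a consequence of Lemma~\ref{lem:rho1}; but since the hypothesis is available either way, this does not affect your argument.
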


\begin{proposition}\label{propo:p13}
     If $\rho$ satisfies $(P1)-(P3)$ then the map previously constructed coincides with $f_{\rho}$ and represents a $C^1$ uniformly expanding map of the circle. 
\end{proposition}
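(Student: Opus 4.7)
The plan is to verify, in sequence, that: (i) the explicit formula \eqref{eq:map} for $f_\rho$ actually solves the ODE system (S) of Proposition \ref{propo:p1}, so the two constructions coincide; (ii) $f_\rho$ is continuous on $\mathbb{S}^1=[0,1]/{\sim}$; (iii) it is $C^1$ across the junction points $x=1/2$ and $x=0\sim 1$; and (iv) its derivative is uniformly bounded below by some $\lambda>1$. The three hypotheses (P1)--(P3) of Lemma \ref{lem:rho1} should turn out to have transparent geometric roles: (P1) for the correct images and measure preservation, (P3) for the $C^1$ matching at the two junction points, and (P2) for expansion on the second branch.

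For step (i), on $[0,1/2]$ the definition $f_1=2g$ gives $f_1'=2\rho$ and $f_1(0)=0$ immediately. On $[1/2,1]$ I would set $h(y):=g(y)-y/2$ and note that $h'(y)=\rho(y)-1/2>0$ since $\rho>1/2$, so $h$ is a strictly increasing $C^1$-diffeomorphism and hence invertible. Differentiating the defining identity $h(f_2(x))=g(x)-1/2$ yields $(\rho(f_2(x))-1/2)f_2'(x)=\rho(x)$, i.e.\ $f_2'=2\rho/(2\rho\circ f_2-1)$, which matches (S), while the initial condition $f_2(1/2)=0$ follows from $g(1/2)=1/2$ via (P1). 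Thus $f_\rho$ coincides with the solution produced in Proposition \ref{propo:p1}.

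For steps (ii)--(iv), the values $f_1(1/2)=2g(1/2)=1$, $f_2(1/2)=0$, and $f_2(1)=h^{-1}(1/2)=1$ (using $g(1)=1$ and $h(1)=1/2$, both consequences of (P1)) establish continuity on $\mathbb{S}^1$. At $x=1/2$ the one-sided derivatives are $f_1'(1/2)=2\rho(1/2)$ and $f_2'(1/2)=2\rho(1/2)/(2\rho(0)-1)$; by (P3) one has $\rho(0)=1$, so both equal $2\rho(1/2)$. At the identification point, $f_1'(0)=2\rho(0)=2$ and $f_2'(1)=2\rho(1)/(2\rho(1)-1)=2$, again by (P3). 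Finally $f_1'=2\rho>1$ since $\rho>1/2$, while $f_2'(x)>1$ is equivalent to $\rho(f_2(x))-\rho(x)<1/2$, which is guaranteed by (P2); uniform expansion $f_\rho'\geq\lambda>1$ then follows from continuity of $\rho$ and $2\rho-1$ on the compact interval $[0,1]$. The whole argument reduces to a sequence of elementary matching checks, so there is no serious obstacle; the main thing to observe is that (P1)--(P3) correspond essentially one-to-one with the three geometric requirements (continuity, $C^1$-matching, uniform expansion), which is what makes the construction in Lemma \ref{lem:rho1} precisely tuned to this purpose.
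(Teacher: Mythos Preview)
Your proof is correct and follows essentially the same approach as the paper: (P3) is used for the $C^1$ matching at $x=1/2$ and at $0\sim 1$, and (P2) is used to ensure $f_2'>1$, with the paper's bound $\frac{2\min\rho}{2\max\rho-1}>1$ being an equivalent reformulation of your inequality $\rho(f_2(x))-\rho(x)<1/2$. The only structural difference is that your step~(i)---verifying that $f_\rho$ solves the system (S)---is carried out in the paper as a separate lemma within the proof of Proposition~\ref{propo:p1}, so by the time the paper reaches Proposition~\ref{propo:p13} this coincidence is already established; you also include an explicit continuity check that the paper leaves implicit.
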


\begin{proposition}\label{propo:mod}
    Let $\omega\in K$ and $\rho$ be the function given by Lemma \ref{lem:rho1}. Then $f'_{\rho}$ has $\omega$ as an optimal modulus of continuity, and in particular $f_{\rho}\in \Gamma^{1}_{\omega}(\mathbb{S}^1)$.
\end{proposition}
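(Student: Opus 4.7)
The plan is to verify $\omega_{f_\rho'} \simeq \omega$ by treating the two branches of $f_\rho$ separately and establishing matching upper and lower bounds, after which the inclusion $f_\rho \in \Gamma^1_\omega(\mathbb{S}^1)$ follows at once by combining with Propositions \ref{propo:p1} and \ref{propo:p13}.

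For the \emph{lower bound}, I would simply observe that on $[0,\tfrac{1}{2}]$ we have $f_1'(x) = 2\rho(x)$, so
\begin{equation*}
\omega_{f_\rho'}(t) \;\geq\; 2\,\omega_{\rho|_{[0,1/2]}}(t).
\end{equation*}
Because Lemma \ref{lem:rho1} constructs $\rho$ with $\omega$ as an optimal modulus of continuity on $[0,1]$, and in view of condition (P3) together with the explicit shape of $\rho$ near $0$ supplied by the proof of that lemma, the $\omega$-size oscillations of $\rho$ are already attained on $[0,\tfrac{1}{2}]$; this yields $\omega_{f_\rho'}(t) \geq c\,\omega(t)$ for some $c > 0$.

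For the \emph{upper bound}, the first branch is immediate: $|f_1'(x) - f_1'(y)| = 2|\rho(x)-\rho(y)| \leq C\,\omega(|x-y|)$. The substantive estimate concerns the second branch
\begin{equation*}
f_2'(x) \;=\; \frac{2\rho(x)}{2\rho(f_2(x)) - 1}.
\end{equation*}
Since $\rho > \tfrac{1}{2}$ on the compact interval $[0,1]$, the denominator $2\rho - 1$ is bounded below by some $\delta > 0$, while $\rho$ is bounded above by some $M$; the elementary identity $A/B - A'/B' = (A-A')/B + A'(B'-B)/(BB')$ then yields
\begin{equation*}
|f_2'(x) - f_2'(y)| \;\leq\; C_1\,|\rho(x) - \rho(y)| + C_2\,|\rho(f_2(x)) - \rho(f_2(y))|.
\end{equation*}
The first term is bounded by $C_1\,\omega(|x-y|)$. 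For the second, the same bounds on $\rho$ force $f_2' \leq 2M/\delta =: L$, so $f_2$ is $L$-Lipschitz; concavity of $\omega$ with $\omega(0) = 0$ implies subadditivity and hence $\omega(L|x-y|) \leq (\lceil L\rceil + 1)\,\omega(|x-y|)$, closing the chain to $|f_2'(x) - f_2'(y)| \leq C\,\omega(|x-y|)$.

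Putting the two branches together, and using the matching condition $f_{l,1}'(\tfrac{1}{2}) = f_{r,2}'(\tfrac{1}{2})$ of Proposition \ref{propo:p13} to control the cross-pair case $x \in [0,\tfrac{1}{2}]$, $y \in [\tfrac{1}{2},1]$ by a triangle inequality through $\tfrac{1}{2}$, one obtains $\omega_{f_\rho'} \simeq \omega$. The ``in particular'' statement is then immediate: Propositions \ref{propo:p1} and \ref{propo:p13} already give that $f_\rho$ is a $C^1$ uniformly expanding circle map preserving $\rho\,dm$, and $\rho \geq \min\rho > 0$ makes this measure equivalent to Lebesgue, so $f_\rho \in \Gamma^1_\omega(\mathbb{S}^1)$. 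I expect the main technical obstacle to be the composition $\rho \circ f_2$ in the second branch: a priori it could inflate the modulus of continuity, and controlling it relies precisely on the mildness of $\omega$ under Lipschitz changes of variable, which is exactly the point at which concavity (built into the definition of $K$) is being used.
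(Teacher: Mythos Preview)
Your argument is correct and reaches the same conclusion, but the decomposition you use for the second branch differs from the paper's. The paper writes
\[
|f_2'(x)-f_2'(y)|=|\rho(x)-\rho(y)|\cdot\left|\dfrac{f_2'(x)-f_2'(y)}{\rho(x)-\rho(y)}\right|
\]
and argues that the second factor is bounded \emph{above and below} away from zero, thereby obtaining two-sided bounds $\alpha\,\omega\le\omega_{f_2'}\le\beta\,\omega$ in one stroke. You instead use the additive identity $A/B-A'/B'=(A-A')/B+A'(B'-B)/(BB')$ together with the Lipschitz bound on $f_2$ and the concavity of $\omega$ to get only the \emph{upper} bound on the second branch, and you take the lower bound entirely from the first branch via the explicit form $\rho=1+\omega$ near $0$ supplied by Lemma~\ref{lem:rho1}. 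Your route is arguably cleaner: the paper's claim that the multiplicative factor stays bounded away from zero is not fully justified there (one has to rule out cancellation in the numerator, which implicitly also needs the Lipschitz/concavity argument you make explicit), whereas your lower bound is immediate. A minor addition: on the circle you should also handle the cross-pair case $x$ near $0$, $y$ near $1$ by the same triangle-inequality trick through the identified endpoint, using $f_1'(0)=f_2'(1)$ from Proposition~\ref{propo:p13}.
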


\subsection{Proof of proposition \ref{propo:p1}}

We will split the proof of the proposition to 3 lemmas.

\begin{lemma}
    If $\rho>1/2$ and satisfies $(P1)$  Then $f_{\rho}$ is a well defined full branch map of the interval.
\end{lemma}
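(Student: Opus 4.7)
The plan is to verify each of the two branches of $f_\rho$ in \eqref{eq:map} separately; the hypotheses $\rho > 1/2$ and (P1) are exactly what is needed to make the construction well-defined and to ensure each branch maps onto the whole interval.

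First, setting $g(x) := \int_0^x \rho(t)\,dt$, the two consequences of (P1) I would record immediately are $g(1/2) = 1/2$ and $g(1) = 1$. The first branch $f_1(x) = 2g(x)$ is then trivially a $C^1$ strictly increasing bijection from $[0, 1/2]$ onto $[0, 1]$, i.e.\ a full branch.

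For the second branch, introduce the auxiliary maps $\phi := g - \tfrac{1}{2} I$ and $\psi := g - \tfrac{1}{2}$, so that $f_2 = \phi^{-1} \circ \psi$ according to \eqref{eq:map}. The plan is to verify, in order: (i) $\phi$ restricted to $[0, 1]$ is an increasing homeomorphism onto $[0, 1/2]$; (ii) $\psi$ restricted to $[1/2, 1]$ is an increasing homeomorphism onto the same interval $[0, 1/2]$; and (iii) therefore $f_2 = \phi^{-1} \circ \psi$ is a well-defined increasing homeomorphism from $[1/2, 1]$ onto $[0, 1]$. For (i), monotonicity follows from $\phi' = \rho - 1/2 > 0$---this is the single place where the strict hypothesis $\rho > 1/2$ enters---while the endpoint values $\phi(0) = 0$ and $\phi(1) = 1/2$ are immediate from (P1). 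For (ii), $\psi' = \rho > 0$ gives monotonicity, and $\psi(1/2) = 0$, $\psi(1) = 1/2$ again follow from (P1). Step (iii) is then automatic as a composition of increasing homeomorphisms, and combining with $f_1$ gives the claimed full branch map of the interval.

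There is no essential obstacle here: once the two identities $g(1/2) = 1/2$ and $g(1) = 1$ are noted, the whole argument reduces to checking monotonicity and endpoint values. The conceptual content is rather that (P1) is exactly the condition making the ranges of $\psi$ and $\phi$ compatible (both equal $[0,1/2]$), while $\rho > 1/2$ is exactly the condition making $\phi$ invertible; either one is sharp. As a side benefit, differentiating the identity $\phi(f_2(x)) = \psi(x)$ gives $(\rho(f_2) - 1/2) f_2' = \rho$, i.e.\ $f_2$ satisfies the second equation of $(S)$, which will be useful later for the remaining parts of Proposition~\ref{propo:p1}.
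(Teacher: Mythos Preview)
Your proof is correct and follows essentially the same approach as the paper: verifying the endpoint values of $g$, $g-\tfrac{1}{2}I$, and $g-\tfrac{1}{2}$ from (P1), and using $\rho>1/2$ to ensure invertibility of $g-\tfrac{1}{2}I$. Your presentation is in fact slightly more careful (the paper writes $g(\tfrac{1}{2})=1$ where it should be $g(\tfrac{1}{2})=\tfrac{1}{2}$), and your closing remark about the identity $(\rho\circ f_2 - \tfrac{1}{2})f_2' = \rho$ anticipates the next lemma in the paper.
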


\begin{proof}
    First, notice that by definition $g(0)=0$ and by $(P1)$ we have $g(\frac{1}{2})=1$, so we obtain that $f_{\rho}$ maps diffeomorphically $[0,\frac{1}{2}]$ to $[0,1]$.
    Now notice that $g-\frac{1}{2}$ maps $[\frac{1}{2},1]$ to $[0,\frac{1}{2}]$ and since $g'=\rho>\frac{1}{2}$ then $(g-\frac{1}{2}I)$ is a diffeomorphism which maps $[0,1]$ to $[0,\frac{1}{2}]$ and hence $f_{\rho}$ maps diffeomorphically $[\frac{1}{2},1]$ to $[0,1]$. We conclude that our map is well defined and full branch on the interval $[0,1]$.
\end{proof}

\begin{lemma}
    Under the previous conditions, $f_{\rho}$ is a solution to the system $(S)$
\end{lemma}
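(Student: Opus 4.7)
The plan is to verify the claim by a direct differentiation of the two branches of $f_{\rho}$ as defined in \eqref{eq:map}, checking in each case both the initial condition and the ODE from the system $(S)$.

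For the first branch, I would observe that $f_{1} := 2g$ obviously satisfies $f_{1}(0) = 2g(0) = 0$ and, since $g' = \rho$, also $f_{1}' = 2\rho$ on $[0,\tfrac{1}{2}]$; so the first line of $(S)$ holds without further work.

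The content is in the second branch. I would introduce the auxiliary map $h := g - \tfrac{1}{2}I$, so that $h'(x) = \rho(x) - \tfrac{1}{2} > 0$ by the hypothesis $\rho > \tfrac{1}{2}$; hence $h$ is a $C^{1}$ diffeomorphism and $h^{-1}$ is differentiable with $(h^{-1})'(y) = 1/h'(h^{-1}(y))$. Then $f_{2} = h^{-1}\circ(g - \tfrac{1}{2})$ is well defined, and $(P1)$ gives $g(\tfrac{1}{2}) = \tfrac{1}{2}$, so $(g - \tfrac{1}{2})(\tfrac{1}{2}) = 0 = h(0)$, which yields the initial condition $f_{2}(\tfrac{1}{2}) = h^{-1}(0) = 0$. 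Applying the chain rule I obtain
\begin{equation*}
f_{2}'(x) \;=\; (h^{-1})'\bigl(g(x)-\tfrac{1}{2}\bigr)\cdot g'(x)
\;=\; \frac{\rho(x)}{h'(f_{2}(x))}
\;=\; \frac{\rho(x)}{\rho(f_{2}(x)) - \tfrac{1}{2}}
\;=\; \frac{2\rho(x)}{2\rho(f_{2}(x)) - 1},
\end{equation*}
which is exactly the second equation of $(S)$.

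Since there is nothing subtle here beyond unpacking definitions, I do not expect any genuine obstacle: the only point that requires care is ensuring $h$ is actually invertible, which is why the hypothesis $\rho > \tfrac{1}{2}$ (already used in the previous lemma to establish that $f_{\rho}$ is full branch) is essential — without it the expression $h^{-1}$ would not make sense and the denominator $2\rho\circ f_{2} - 1$ in $(S)$ could vanish.
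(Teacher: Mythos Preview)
Your proof is correct and takes essentially the same approach as the paper: both rely on the invertibility of $h=g-\tfrac{1}{2}I$ (from $\rho>\tfrac{1}{2}$) and the chain rule to relate the definition of $f_{2}$ to the second equation of $(S)$. The only cosmetic difference is direction: the paper starts from the ODE, rewrites it as $(g-\tfrac{1}{2}I)\circ f_{2}=g-\tfrac{1}{2}$, and then inverts, whereas you start from the explicit formula and differentiate forward; your route is arguably the more natural one given how the lemma is phrased.
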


\begin{proof}
    Let us recall that $g$ is the map defined on $[0,1]$ by:

\begin{equation*}
    g(x)=\int_{0}^{x}\rho(t)dt.
\end{equation*}

\noindent Clearly, $f'_{\rho,1}(x)=2\rho(x)$, now we have:

to show the other equality, notice that
\begin{equation*}
    f'_{2}=\dfrac{2\rho}{2\rho\circ f_{2}-1}
\end{equation*}
\noindent is equivalent to:

\begin{equation*}
    2f'_{2}\rho\circ f_{2}-f'_{2}=2\rho \iff 2(g\circ f_{2}-\dfrac{1}{2}f_{2})'=2g'
\end{equation*}after integrating over $[\dfrac{1}{2},x]$ we obtain:

\begin{equation*}
    (g-\dfrac{1}{2}I)\circ f_{2}(x)=g(x)-\dfrac{1}{2}.
\end{equation*}

\noindent where $I$ denotes the identity map, notice that $(g-\dfrac{1}{2}I)'>0$ and hence $g-\dfrac{1}{2}I$ is invertible, we obtain finally:

\begin{equation*}
    f_{2}=f_{\rho,2}=(g-\dfrac{1}{2}I)^{-1}\circ(g-\dfrac{1}{2}).
\end{equation*}
and so we conclude that the system (S) admits $f_{\rho}$ as a solution.
\end{proof}

\begin{lemma}
  If a solution of (S) is full branch then it preserves the measure $\mu$ defined by the density $\rho$.  
\end{lemma}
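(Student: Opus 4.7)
The plan is to verify the invariance of $\mu$ via the Perron–Frobenius (transfer) operator, which for a full branch map $f$ with branches $f_1,f_2$ acts on a continuous density $\rho$ by
\begin{equation*}
\mathcal{L}_f\rho(y)\;=\;\frac{\rho(f_1^{-1}(y))}{f_1'(f_1^{-1}(y))}\;+\;\frac{\rho(f_2^{-1}(y))}{f_2'(f_2^{-1}(y))}.
\end{equation*}
It is standard that $\mu=\rho\,dm$ is $f$-invariant if and only if $\mathcal{L}_f\rho=\rho$ pointwise, so the proof reduces to checking this functional equation, which I expect to follow directly from the two equations of the system $(S)$.

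First I would fix $y\in[0,1]$ and set $x_1=f_1^{-1}(y)\in[0,\tfrac12]$ and $x_2=f_2^{-1}(y)\in[\tfrac12,1]$, which exist and are unique because, by hypothesis, the solution of $(S)$ is full branch. The first equation of $(S)$ gives $f_1'(x_1)=2\rho(x_1)$, hence
\begin{equation*}
\frac{\rho(x_1)}{f_1'(x_1)}=\frac{1}{2}.
\end{equation*}
The second equation of $(S)$, evaluated at $x_2$ and using $f_2(x_2)=y$, gives
\begin{equation*}
f_2'(x_2)=\frac{2\rho(x_2)}{2\rho(y)-1},
\end{equation*}
so that
\begin{equation*}
\frac{\rho(x_2)}{f_2'(x_2)}=\frac{2\rho(y)-1}{2}.
\end{equation*}

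Adding the two contributions yields
\begin{equation*}
\mathcal{L}_f\rho(y)=\frac{1}{2}+\frac{2\rho(y)-1}{2}=\rho(y),
\end{equation*}
which is precisely the invariance identity. I would then conclude that $\mu$ is $f$-invariant. The only mild subtlety is the well-definedness of $f_2'(x_2)$: the denominator $2\rho(y)-1$ must be strictly positive, but this is guaranteed by the hypothesis $\rho>1/2$ assumed at the start of the subsection (and which makes the differential equation for $f_2$ well-posed in the first place). So there is no real obstacle — the proof is essentially a one-line transfer-operator verification, and the content is entirely encoded in the clever choice of the right-hand sides of $(S)$, which were designed precisely so that the two summands of $\mathcal{L}_f\rho(y)$ telescope to $\rho(y)$.
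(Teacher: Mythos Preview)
Your proof is correct, and it is genuinely cleaner than the paper's. You invoke the Perron--Frobenius operator directly and verify $\mathcal{L}_f\rho=\rho$ pointwise; the two equations of $(S)$ were visibly reverse-engineered for exactly this purpose, so the check is a two-line computation. The paper instead works at the level of the cumulative distribution: it reduces invariance to the identity $f_*\mu([0,y])=\mu([0,y])$ for all $y$, splits $\mu([0,y])=\phi_1(y)+\phi_2(y)$ with $\phi_1(y)=\tfrac12 y$ and $\phi_2(y)=\int_0^y(\rho-\tfrac12)$, and then shows that the pair of identities $\mu([0,f_1^{-1}(y)])=\phi_1(y)$ and $\mu([\tfrac12,f_2^{-1}(y)])=\phi_2(y)$, when differentiated in $y$, reproduce the system $(S)$; the initial conditions of $(S)$ are then used to integrate back and recover the identities without an additive constant. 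This is essentially the transfer-operator identity integrated once, so the two arguments are equivalent in content. Your approach has the advantage of being shorter and of making transparent why the right-hand sides of $(S)$ were chosen; the paper's approach has the minor advantage of being fully self-contained (it does not cite the equivalence $\mathcal{L}_f\rho=\rho \iff f_*\mu=\mu$), and it explicitly tracks the role of the initial conditions $f_1(0)=0$, $f_2(\tfrac12)=0$. Incidentally, the paper itself uses the transfer-operator formulation later, in the proof of Theorem~\ref{thm:lebesgue}, so your method is entirely in keeping with the tools of the paper.
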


\begin{proof}
     We start by recalling the following sublemma:

\begin{sublemma}
    If $f_{\star}\mu([0,y])=\mu([0,y])$ for every $y\in[0,1]$ then $\mu$ is $f$-ivariant.
\end{sublemma}

\begin{proof}
    The $\sigma$-algebra of Lebesgue measurable sets is generated by intervals of the form $[0,y]$ and all subsets of Borel sets of zero measure, since $f$ is $C^1$ then it already preserves sets of measure zero, and so if the assumption of the lemma is satisfied then $\mu$ if $f$-invariant.
\end{proof}
  \noindent Now let $y\in[0,1]$ and consider $f$ to be a full branch map solution to (S) on the unit interval $[0,1]$, since the derivative is everywhere positive, the branches are injective and so every pre-image contains exactly two points, therefore, we have that $f^{-1}(\lbrace y\rbrace)=\lbrace f^{-1}_{1}(y), f^{-1}_{2}(y) \rbrace$ such that $f^{-1}_{1}(y)\in [0,\frac{1}{2}]$ and $f^{-1}_{2}(y)\in[\frac{1}{2},1]$, of course, we are assuming for simplicity here that the middle point of the interval is the end point of the first branch, we obtain:

\begin{equation}
    f^{\star}\mu([0,y])=\mu(f^{-1}([0,y]))=\mu([0,f^{-1}_{1}(y)])+\mu([\frac{1}{2},f^{-1}_{2}(y)]).
\end{equation}


\begin{equation*}
 \phi_{1}(x)=\frac{1}{2}x\ \text{and}\ \phi_{2}(x)=\int_{0}^{x}(\rho(t)-\frac{1}{2})dt. 
\end{equation*}

\noindent Clearly $\mu([0,y])=\phi_{1}(y)+\phi_{2}(y)$ and that $\phi_{1}$ maps $[0,1]$ to $[0,\frac{1}{2}]$ and $\phi_{2}$ maps $[0,1]$ to $[0,\frac{1}{2}]$ because $\phi_{2}(2)$ is increasing since $\rho>\frac{1}{2}$, by definition also $\phi_{2}(0)=0$ and by $(P1)$

\begin{equation*}
    \phi_{2}(1)=\int_{0}^{1}\rho(t)-\frac{1}{2}dt=\int_{0}^{1}\rho(t)dt-\frac{1}{2}=1-\frac{1}{2}=\frac{1}{2}.
\end{equation*}

\noindent Now we want to solve the following equations:

 \begin{equation}\label{eq:maineq}
     \mu([0,f^{-1}_{1}(y)])=\phi_{1}(y)\ \text{and}\ \mu([\frac{1}{2},f^{-1}_{2}(y)])=\phi_{2}(y)
 \end{equation}

 \noindent Which is equivalent to:

 \begin{equation*}
     \int_{0}^{f_{1}^{-1}(y)}\rho(t)dt=\phi_{1}(y)\ \text{and}\ \int_{\frac{1}{2}}^{f_{2}^{-1}(y)}\rho(t)dt=\phi_{2}(y).
 \end{equation*}

 \noindent by differentiating both sides of the previous equations and using the formula:

 \begin{equation}
     \dfrac{d}{dy}\int_{\alpha}^{u(y)}v(x)dx=u'(y).v(u(y)).
 \end{equation}

\noindent we obtain the following two equations:
\begin{equation*}
    (f^{-1}_{1})'\rho\circ f^{-1}_{1}=\frac{1}{2}\ \text{and}\ (f^{-1}_{2})'\rho\circ f^{-1}_{2}=\rho-\frac{1}{2}.
\end{equation*}

\noindent using that $(f^{-1})'=1\slash f'\circ f^{-1}$ we obtain:

\begin{equation*}
    \dfrac{\rho}{f_1'}\circ f^{-1}_{1}=\dfrac{1}{2}\ \text{and} \  \dfrac{\rho}{f_2'}\circ f^{-1}_{2}=\rho-\dfrac{1}{2}.
\end{equation*}

\noindent compositing the left hand side of the previous equation by $f_1$ and the right side by $f_{2}$ we obtain exactly the system (S) but without particular initial conditions due to the differentiation step prior to obtaining (4) and so we are not sure the solution corresponds exactly to equation \eqref{eq:maineq}, we will show that the initial conditions of system (S) are sufficient to obtain \eqref{eq:maineq} and hence complete the proof. Notice that

\begin{equation*}
    \dfrac{d}{dy} \int_{0}^{f_{1}^{-1}(y)}\rho(t)dt=\phi'_{1}(y)\ \text{and}\ \dfrac{d}{dy}\int_{\frac{1}{2}}^{f_{2}^{-1}(y)}\rho(t)dt=\phi'_{2}(y).
\end{equation*}

\noindent Let us integrate the left and right hand side on $[0,z]$,  precisely:

\begin{equation*}
      \int_{0}^{z}(\dfrac{d}{dy} \int_{0}^{f_{1}^{-1}(y)}\rho(t)dt)dy=\int_{0}^{z}\phi'_{1}(y)dy,
\end{equation*}
\noindent and
\begin{equation*}
    \int_{0}^{z}(\dfrac{d}{dy}\int_{\frac{1}{2}}^{f_{2}^{-1}(y)}\rho(t)dt)dy=\int_{0}^{z}\phi'_{2}(y)dy.
\end{equation*}

\noindent this is equivalent to:

\begin{equation*}
    \int_{0}^{f_{1}^{-1}(z)}\rho(t)dt-\int_{0}^{f_{1}^{-1}(0)}\rho(t)dt=\phi_{1}(z)-\phi_{1}(0)=\phi_{1}(z).
\end{equation*}
\noindent and
\begin{equation*}
    \int_{\frac{1}{2}}^{f_{2}^{-1}(z)}\rho(t)dt-\int_{\frac{1}{2}}^{f_{2}^{-1}(0)}\rho(t)dt=\phi_{2}(z)-\phi_{2}(0)=\phi_{2}(z).
\end{equation*}

\noindent since the initial conditions are $f_1^{-1}(0)=0$ and $f_2^{-1}(0)=\frac{1}{2}$ we obtain finally the following:

\begin{equation*}
    \mu([0,f_1^{-1}(z)])=\phi_{1}(z)
\end{equation*}
and
\begin{equation*}
     \mu([\frac{1}{2},f_2^{-1}(z)])=\phi_{2}(z)
\end{equation*}.
This shows that (3) is satisfied and hence finishes the proof.
    \end{proof}
By the previous lemmas we proved, the map $f_{\rho}$ is a full branch map which preserves the measure $\mu$ defined by $\rho$, hence finishing the roof of proposition 5.

\subsection{Proof of proposition \ref{propo:p13}}

\begin{proof}
    
    \noindent Since $\rho>\frac{1}{2}$ we have that $f'(x)=2\rho(x)>1$ for every $x\in[0,\frac{1}{2}]$, now by $(P2)$ we also have that:

    \begin{equation*}
        \dfrac{2\min\limits_{[0,1]}\rho}{2\max\limits_{[0,1]}\rho-1}>\dfrac{2\min\limits_{[0,1]}\rho}{2(\min\limits_{[0,1]}\rho+\frac{1}{2})-1}=1
    \end{equation*}

    \noindent and hence we obtain that for every $x\in [\frac{1}{2},1]$ that $f_2'(x)>1$ and so $f$ is uniformly expanding, it remains to prove that $f$ represents a $C^1$ map of the circle as explained at the beginning of the proof of the theorem, namely, we have that  $f'_{l,1}(\frac{1}{2})=f'_{r,2}(\frac{1}{2})$ because by $(P3)$:

\begin{equation*}
    f'_{r,2}(\frac{1}{2})=\dfrac{2\rho(\frac{1}{2})}{2\rho(0)-1}=2\rho(\frac{1}{2})= f'_{l,1}(\frac{1}{2}).
\end{equation*}

\noindent Finally, we have $f'(0)=2\rho(0)=2$ and $f'(1)=\dfrac{2\rho(1)}{2\rho(1)-1}=2$. This shows indeed that the solution $f$ defines a $C^1$ uniformly expanding map on the circle and preserves $\mu$.
\end{proof}

\subsection{Proof of proposition \ref{propo:mod}}

  \begin{proof}

\noindent The regularity of the derivative on the first branch is clearly $\omega$-continuous. For the second branch we have that by definition:

\begin{equation*}
     \omega_{f'_2}(t)=\sup\limits_{0<\vert x-y\vert<t}\vert f'_2(x)-f'_2(y)\vert
\end{equation*}
\begin{equation*}
    =\sup\limits_{0<\vert x-y\vert<t}\vert\dfrac{2\rho(x)}{2\rho\circ f_{2}(x)-1}-\dfrac{2\rho(y)}{2\rho\circ f_{2}(y)-1}\vert
\end{equation*}
\begin{equation*}
    =\sup\limits_{0<\vert x-y\vert<t}|\rho(x)-\rho(y)||\dfrac{\dfrac{2\rho(x)}{2\rho\circ f_{2}(x)-1}-\dfrac{2\rho(y)}{2\rho\circ f_{2}(y)-1}}{\vert \rho(x)-\rho(y)\vert}|
\end{equation*} 

\noindent The term $|\dfrac{\dfrac{2\rho(x)}{2\rho\circ f_{2}(x)-1}-\dfrac{2\rho(y)}{2\rho\circ f_{2}(y)-1}}{\vert \rho(x)-\rho(y)\vert}|$ is uniformly bounded above and below  away from 0 in $[0,1]\times[0,1]$ because:

\begin{equation*}
   \vert\dfrac{\dfrac{2\rho(x)}{2\rho\circ f_{2}(x)-1}-\dfrac{2\rho(y)}{2\rho\circ f_{2}(y)-1}}{\vert \rho(x)-\rho(y)\vert}\vert
\end{equation*}
\begin{equation*}
   = \dfrac{1}{(2\rho\circ f_{2}(x)-1)(2\rho\circ f_{2}(y)-1)}\vert 2-\dfrac{4(\rho(x)\rho\circ f_{2}(y)-\rho(y)\rho\circ f_{2}(x)) }{\rho(y)-\rho(x)}\vert. 
\end{equation*}

\noindent The term $\dfrac{1}{(2\rho\circ f_{2}(x)-1)(2\rho\circ f_{2}(y)-1)}$ is bounded above and below away from zero because  of property $(P2)$ and that $\rho>1/2$, again for the same reasons we obtain that

\begin{equation*}
\vert2- \dfrac{4(\rho(x)\rho\circ f_{2}(y)-\rho(y)\rho\circ f_{2}(x)) }{\rho(y)-\rho(x)}\vert 
\end{equation*}

\noindent is also bounded above and below away from zero, this shows that for some $\alpha,\beta>0$ we get $\alpha\omega\leq \omega_{f'_2}(t)\leq \beta\omega$ and thus $f\in E_{\omega}^{1}(S^1)$.
\vspace{0.1cm}
  \end{proof}  

\subsection{Proof of Lemma \ref{lem:rho1}}

\noindent To finish the proof of the theorem, it clearly only remains to prove Lemma \ref{lem:rho1}.

\begin{proof}
    The existence of $\rho$ is guaranteed because on a small enough interval $[0,t_{\omega}]$ on which $\omega(t)<<\frac{3}{2}$ we can chose $\rho(t)$ to be equal to $1+\omega(t)$ for every $t\in [0,t_{\rho}]$, since the translation of an element in $K$  admits itself as an optimal modulus of continuity, it is enough to extend it to the rest of the interval in a $C^1$ way while satisfying the other properties which do not depend on how we define $\rho$ on a small enough neighborhood of $0$ as far as on that neighborhood $(P2)$ and $(P3)$ are satisfied there. We will in fact consider in the rest of the paper $\rho$ being defined on some neighborhood $[0,t_{\omega}]$ as $1+\omega(t)$.
\end{proof}

\section{Proof of  Theorem \ref{thm:lebesgue}}

We split the proof into two parts. We first prove the first statement concerning the extension of an arbitrary diffeomorphism to a Lebesgue measure preserving circle map, and then show that this map has optimal modulus of continuity \(\omega\)

\subsection{Extensions which preserve Lebesgue measure}
 The philosophy of the proof will be similar to the proof of   Theorem \ref{thm:main} but we will introduce a new ordinary differential equation that arises naturally from the transfer operator. 
 Let $f\in E^1(S^1)$ and,  for all $h\in L^1_{\lambda}(S^1)$ and  $\mu_{h} \coloneqq h\cdot \lambda$, we define the transfer operator associated to $f$ and acting on $L^1_{\lambda}(S^1)$ as
\begin{equation}
    Ph=\dfrac{d\big(f_{*}\mu_{h}\big)}{d\lambda}. 
\end{equation}
This operator can be interpreted as the density of the push-forward of measures in respect to Lebesgue. It is well known that the fixed points of $P$ corresponds to the densities of $f$-invariant measures and that the transfer operator for maps of degree 2 has an explicit formula given by 
\begin{equation}
    Ph(x)=\sum\limits_{y\in f^{-1}(x)}\dfrac{h(y)}{f'(y)}.
\end{equation}
We now suppose that $f_2:[a,1]\to \mathbb{R}$ is a diffeomorphism into its image and  consider the differential equation 
\begin{equation}\label{transfer operator equation...}
    \frac{1}{f_1'\big( f_1^{-1}(x)\big)}+\frac{1}{f_2'\big(f_2^{-1}(x)\big)}=1,\quad ~x\in[0,1].
\end{equation}
 The equation \eqref{transfer operator equation...} is equivalent to 
\begin{equation}\label{simplified transfer operator equation...}
f_2'(x)=\frac{f_1'\Big(f_1^{-1}\big(f_2(x)\big)\Big)}{f_1'\Big(f_1^{-1}\big(f_2(x)\big)\Big)-1}, \quad ~x\in[a,1],
\end{equation}
and since $f_1$ is $C^1,$ by Peano's existence theorem the Cauchy problem with the initial condition $f_2(a)=0$ admits a maximal solution $f_2$ defined on the interval $[a,1]$. Let's show that $f_2$ maps diffeomorphically onto $[0,1]$. Notice that $f_2'(x)>1$ for all $x\in [a,1]$, therefore it only remains to show that $f_2(1)=1.$ Assume that $f_2(1)<1,$ and consider $I=[f_{2}(1),1]$. By construction the map $f:[0,1]\to[0,1]$ defined by $f_1$ and $f_2$ preserves Lebesgue measure since $(3)$ corresponds to $(5)$ by taking $h$ to be the constant function 1, so $\lambda(I)=\lambda(f^{-1}(I))=\lambda(f^{-1}_1(I))$ because $f^{-1}_2(I)=\emptyset$, this is a contradiction because $f^{-1}_1$ is a contraction. We conclude that $f$ is indeed a uniformly expanding full branch map of the interval.

\noindent Uniqueness cannot be deduced directly from the equation (\ref{simplified transfer operator equation...}), because Peano's existence theorem provides only existence, we will deduce it using the fact that the solution preserves $\lambda$. Let $f,g:[0,1]\to[0,1]$ be two full branch interval maps which preserve Lebesgue measure, assume they have the same first branches (i,e $f_{1}=g_{1}$) on an interval $[0,a]$, we obtain that for every $y\in[0,y]$:

\begin{equation*}
    \lambda([0,y])=\lambda(f^{-1}([0,y]))=\lambda(g^{-1}([0,y])).
\end{equation*}

\noindent which implies by assumption:

\begin{equation*}
    \lambda([a,f^{-1}_{2}(y)])=\lambda([a,g^{-1}_2(y)])\Leftrightarrow f^{-1}_{2}(y)=g^{-1}_{2}(y)
\end{equation*}

\noindent this implies that $f=g$ and thus uniqueness of solutions.
\noindent For the second part of the proposition, we want to show that the full branch map obtained represents a circle map if and only if \eqref{The condition to extend a full branch to an expanding map} holds. This is because for a full branch map to lift to a circle map we need that the derivatives at the end points to coincide, as well as the left and right derivatives at the point $a$ and so by equation $(9)$ we need \eqref{The condition to extend a full branch to an expanding map} to hold.

\subsection{Modulus of continuity}
It just remains to show that \( f \) has \( \omega\) as an optimal modulus of continuity. 
Take $a\in(0,1)$ and consider a $C^1$ expanding diffeomorphism $f_{1}:[0,a]\to [0,1]$ admitting $\omega\in K$ as an optimal modulus of continuity and satisfying condition of \ref{The condition to extend a full branch to an expanding map}. By the previous section,  this extends to a Lebesgue preserving circle expanding map $f$, the regularity of the derivative on the first branch is by choice $\omega$-continuous, for the second branch $f_{2}$ we know that:

\begin{equation*}
    f_2'(x)=\frac{f_1'\Big(f_1^{-1}\big(f_2(x)\big)\Big)}{f_1'\Big(f_1^{-1}\big(f_2(x)\big)\Big)-1}.
\end{equation*}

\noindent Consider the map $\varphi=f_{1}^{-1}\circ f_{2}$. For $x,y\in[\frac{1}{2},1]$ we have:

\begin{equation*}
   |f_{2}'(x)-f_{2}'(y)| =|\frac{f_1'(\varphi(x))}{f_1'(\varphi(x))-1}-\frac{f_1'(\varphi(y))}{f_1'(\varphi(y))-1}|=|\frac{f_1'(\varphi(x))-f_1'(\varphi(y))}{(f_1'(\varphi(x))-1)(f_1'(\varphi(y))-1)}|.
\end{equation*}

\noindent Since $(f_1'(\varphi(x))-1)(f_1'(\varphi(y))-1)$ is bounded below away from $0$ because $f'>1$ and since $\varphi$ is Lipschitz (since it is $C^1$ on a compact interval)  we obtain:

\begin{equation*}
    \sup\limits_{|x-y|\leq t}|f_{2}'(x)-f_{2}'(y)|\simeq  \sup\limits_{|x-y|\leq t}|f_1'(\varphi(x))-f_1'(\varphi(y))|\simeq \omega(t).
\end{equation*}

\noindent We conclude that $f\in\Gamma^{1}_{\omega}(\mathbb{S}^1)$. Notice that in our construction, the choices we made to construct an example allow to construct uncountably many such element. This finishes the proof of the theorem.

\section{Proof of Theorem \ref{thm:dinidist1}}
 
\begin{proof}
    
\noindent Let $\omega\in K$ and $f\in \mathcal{F}_{\omega}$. For $k\in\mathbb{N}$ and by the chain rule we have that:

\begin{equation*}
|\log\dfrac{(f^k)'(x)}{(f^k)'(y)}|=|\sum\limits_{0\leq i\leq k-1} (\log (f'(f^i(x))-\log f'(f^i(y))|
\end{equation*}

\noindent Using mean value theorem, for every $0\leq i\leq k-1$ there exists 
\[
\lambda=\min\limits_{x\in S^1}\leq z_i\leq \sigma=\max\limits_{x\in S^1} |f'(x)|>1
\]
 such that
\begin{equation*}
    \sum\limits_{0\leq i\leq k-1} \log f'(f^i(x))-\log f'(f^i(y))= \sum\limits_{0\leq i\leq k-1}\dfrac{1}{z_{i}}(f'(f^i(x))-f'(f^i(y))).
\end{equation*}
 Now for every $k\in\mathbb{N}$, let us take the first partition element of order $k$, i.e.  $\omega^{k}_{1}=[0,r_{k}]$ where $\sigma^{-k}\leq r_k\leq\lambda^{-k}$. Let us take $y=0$ and $x_{k}\in\omega^{k}_{1}$ such that $f^{k}(x_k)\leq t_{\omega}$, for instance we can take $x_{k}=f^{-k}_{1}(t_{\omega})$ for \( k\)  large enough, where $f_1^{k}$ denotes the first branch of the $k$-\emph{th} iterate of $f$. From this we obtain
\begin{equation*}
    \left|\log\dfrac{(f^k)'(x_k)}{(f^k)'(0)}\right|\geq\dfrac{1}{\sigma}\sum\limits_{0\leq i\leq k-1} (f'(f^i(x_{k}))-f'(0))
\end{equation*}
and  we have that $f'(f^i(x_{k}))=2\rho(f^i(x_{k}))=2+2\omega_{f'}(f^i(x_k))$ and $f'(0)=2$ and so we obtain
\begin{equation*}
     \left|\log\dfrac{(f^k)'(x_k)}{(f^k)'(0)}\right|\geq\dfrac{2}{\sigma}\sum\limits_{0\leq i\leq k-1} \omega_{f'}(f^i(x_k)).
\end{equation*}
We have that $f^{i}(x_{k})=f^{i-k}(t_{\omega})$ and so we obtain 
\begin{equation*}
    f^{i}(x_{k})\geq C \sigma^{i-k}
\end{equation*}
and hence we get 
\begin{equation}\label{eq:lowerbound}
    \left|\log\dfrac{(f^k)'(x_k)}{(f^k)'(0)}\right|\geq\dfrac{2}{\sigma} \sum\limits_{0\leq i\leq k-1} \omega_{f'}(C\sigma^{i-k}). 
\end{equation}
We can now apply a Lemma from \cite{9}.

\begin{lemma}[\cite{9}]\label{lem:nonint}
$\omega\in K$ is not Dini-integrable if and only if  for every $\sigma>1$ we have
    \begin{equation*}
       \lim\limits_{k\to\infty} \sum_{1\leq i\leq k}\omega(\sigma^{-i})=\infty.
    \end{equation*}
\end{lemma}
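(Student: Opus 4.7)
The plan is a standard integral-test comparison between the Dini integral and the geometric sampling $\sum_i \omega(\sigma^{-i})$, exploiting only the monotonicity of $\omega$ (concavity is not needed here). Fix any $\sigma>1$. The idea is to partition the interval $(0,1]$ into the geometric pieces $I_i \coloneqq [\sigma^{-(i+1)},\sigma^{-i}]$ for $i\geq 0$, so that $(0,1] = \bigcup_{i\geq 0} I_i$ (up to a null set) and each piece has the same logarithmic length $\int_{I_i} dt/t = \ln\sigma$.

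Since $\omega$ is increasing, on each $I_i$ we have the two-sided bound $\omega(\sigma^{-(i+1)}) \leq \omega(t) \leq \omega(\sigma^{-i})$, so
\begin{equation*}
\omega(\sigma^{-(i+1)})\,\ln\sigma \;\leq\; \int_{I_i}\frac{\omega(t)}{t}\,dt \;\leq\; \omega(\sigma^{-i})\,\ln\sigma.
\end{equation*}
Summing over $i\geq 0$ and re-indexing gives the sandwich
\begin{equation*}
\ln\sigma \cdot \sum_{i=1}^{\infty}\omega(\sigma^{-i}) \;\leq\; \int_{0}^{1}\frac{\omega(t)}{t}\,dt \;\leq\; \omega(1)\,\ln\sigma + \ln\sigma \cdot \sum_{i=1}^{\infty}\omega(\sigma^{-i}),
\end{equation*}
which immediately shows that, \emph{for each individual} $\sigma>1$, the Dini integral is finite if and only if the series $\sum_{i\geq 1}\omega(\sigma^{-i})$ converges.

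From this equivalence for a single $\sigma$ both directions of the lemma follow. If $\omega$ is not Dini-integrable, the upper bound forces $\sum_{i\geq 1}\omega(\sigma^{-i})=\infty$ for every $\sigma>1$. Conversely, if $\omega$ is Dini-integrable, the lower bound shows that $\sum_{i\geq 1}\omega(\sigma^{-i})<\infty$ for every $\sigma>1$, so in particular the divergence condition of the lemma fails. There is no real obstacle here; the only small point to be careful about is that the leftmost subinterval $I_0 = [\sigma^{-1},1]$ contributes the harmless boundary term $\omega(1)\ln\sigma$ to the upper estimate, which is why the equivalence is insensitive to whether one starts the summation at $i=0$ or $i=1$.
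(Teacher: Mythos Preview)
Your argument is correct: the geometric partition $I_i=[\sigma^{-(i+1)},\sigma^{-i}]$ together with the monotonicity of $\omega$ gives the two-sided estimate you wrote, and this immediately yields the equivalence of Dini-integrability with convergence of $\sum_i\omega(\sigma^{-i})$ for any (hence every) fixed $\sigma>1$. Note that the paper does not actually prove this lemma; it is quoted from an external reference, so there is no in-paper argument to compare against. Your proof is the standard Cauchy condensation / integral-test computation one would expect for such a statement.
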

Applying Lemma \ref{lem:nonint} to the inequality in \eqref{eq:lowerbound} we get that if if $\omega$ is not Dini-integrable we get 
\[
|\log\dfrac{(f^k)'(x_k)}{(f^k)'(0)}|\to\infty
\]
 and so $f$ has unbounded distortion. Conversely, if \( \omega \) is Dini-integrable then \( f\) has bounded distortion by  \cite{1} and so this finishes the proof.
\end{proof}

\section{Proof of Theorem \ref{thm:generic}}

To prove Theorem \ref{thm:generic} we first prove that the set of moduli which are not Dini-integrable are generic. First of all,  for every  $r\in\mathbb{N}$ let  $K_{r}\subset K$ be the space of moduli of continuity satisfying
    \begin{equation*}
         \int_{0}^{1}\dfrac{\omega(t)}{t}dt \leq r
    \end{equation*}
and let 
\[
K_{\infty}\coloneqq \bigcup_{r\in \mathbb N} K_{r}
\quad \text{ and } \quad 
K_{*}\coloneqq K\setminus K_{\infty}
\]
be the set of Dini integrable and non-Dini-integrable moduli of continuity respectively.

\begin{proposition}\label{prop:generic}
\( K_{*}\) is a  residual (dense $G_{\delta}$) set in the $C^0$-topology.
\end{proposition}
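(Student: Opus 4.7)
The plan is to deduce the result from the Baire category theorem. It suffices to prove that each $K_r$ is closed in $K$ and that its complement $K\setminus K_r$ is dense, because then
\[
K_* \;=\; \bigcap_{r\in\mathbb N}\bigl(K\setminus K_r\bigr)
\]
is a countable intersection of dense open subsets of $K$. The ambient space $K$ is itself a complete metric space (hence Baire) because it is a closed subset of the space of continuous functions with the $C^0$-topology: continuity, concavity, monotonicity, and the normalization $\omega(0)=0$ all pass to $C^0$-limits.

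Closedness of $K_r$ is a direct application of Fatou's lemma. If $\omega_n\in K_r$ converges in $C^0$ to some $\omega\in K$, then $\omega_n(t)/t\to\omega(t)/t$ pointwise on $(0,1]$ with all integrands non-negative, so
\[
\int_0^1\frac{\omega(t)}{t}\,dt \;\leq\; \liminf_{n\to\infty}\int_0^1\frac{\omega_n(t)}{t}\,dt \;\leq\; r,
\]
which shows $\omega\in K_r$.

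The density step is the substantive part: I would construct a universal perturbation sending every $\omega\in K$ into $K_*$. Given $\omega\in K$ and $\varepsilon>0$, set $\tilde\omega:=\omega+\eta$, where $\eta$ is the truncated logarithmic bump
\[
\eta(t)=\begin{cases} -c/\log t & t\in(0,\delta],\\ -c/\log\delta & t\in[\delta,\infty),\end{cases}
\]
with $\eta(0):=0$, for a fixed $\delta\in(0,e^{-2})$ and $c<\varepsilon|\log\delta|$. Continuity at $0$ is immediate since $-c/\log t\to 0$; the identity
\[
\eta''(t) \;=\; -\frac{c(\log t+2)}{t^2\log^3 t}
\]
shows $\eta''<0$ on $(0,e^{-2})$, so $\eta$ is concave on $(0,\delta]$, and the gluing with the constant extension at $\delta$ preserves concavity because the left derivative at $\delta$ is $c/(\delta\log^2\delta)>0$ while the right derivative is $0$. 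The choice of $c$ gives $\|\eta\|_\infty=c/|\log\delta|<\varepsilon$, while the substitution $u=\log t$ yields
\[
\int_0^\delta\frac{\eta(t)}{t}\,dt \;=\; c\int_0^\delta\frac{dt}{t\,|\log t|} \;=\; \infty.
\]
Since $K$ is closed under addition, $\tilde\omega\in K$; and $\int_0^1\tilde\omega(t)/t\,dt=\infty$ forces $\tilde\omega\in K_*$, so $K_*$ itself is dense in $K$, proving density of every $K\setminus K_r$.

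The main technical obstacle is balancing the constraints on $\eta$: small sup norm, divergent Dini integral, concavity, and vanishing at $0$. The profile $-1/\log t$ is essentially the borderline case that fits in $K$ while having divergent Dini integral, and the only delicate point is the global concavity, which is why one has to truncate at some $\delta<e^{-2}$ and glue with a constant beyond.
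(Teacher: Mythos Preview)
Your proof is correct and follows essentially the same route as the paper: both show each $K_r$ is closed with empty interior and then invoke Baire. The only cosmetic differences are that you use Fatou's lemma for closedness where the paper uses an $\epsilon$-cutoff argument, and you explicitly construct the non-Dini perturbation $\eta(t)=-c/\log t$ where the paper simply picks an abstract $\omega_0\in K_*$ and adds $\epsilon\omega_0$.
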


Before proving the proposition we prove two lemmas. 
\begin{lemma}
    The spaces $K_{r}$ are closed  subspaces of $K$ in the \( C^{0}\) topology.
\end{lemma}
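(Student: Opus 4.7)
The plan is to show that $K_r$ is sequentially closed in the $C^0$ topology on $K$. I would take an arbitrary sequence $(\omega_n)_{n\in\mathbb{N}}\subset K_r$ converging uniformly on $[0,1]$ to some $\omega:\mathbb{R}^{+}\to\mathbb{R}^{+}$, and verify two things: first that $\omega$ actually belongs to $K$, and second that $\omega$ satisfies the Dini bound with constant $r$.

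The membership $\omega\in K$ is immediate from standard stability properties of uniform limits. Continuity is inherited, the condition $\omega(0)=0$ passes to the limit because $|\omega(0)|\leq |\omega_n(0)|+\|\omega-\omega_n\|_\infty=\|\omega-\omega_n\|_\infty\to 0$, monotonicity is preserved by pointwise (hence uniform) limits of increasing functions, and concavity is preserved because the defining inequality $\omega_n(\lambda x+(1-\lambda)y)\geq \lambda\omega_n(x)+(1-\lambda)\omega_n(y)$ passes to the limit.

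For the Dini bound, the key point is that $\omega_n\to\omega$ uniformly implies $\omega_n(t)/t\to\omega(t)/t$ pointwise on $(0,1]$, and all the integrands are nonnegative. Thus Fatou's lemma gives
\begin{equation*}
\int_0^1 \frac{\omega(t)}{t}\,dt \;=\; \int_0^1 \liminf_{n\to\infty}\frac{\omega_n(t)}{t}\,dt \;\leq\; \liminf_{n\to\infty}\int_0^1 \frac{\omega_n(t)}{t}\,dt \;\leq\; r,
\end{equation*}
so $\omega\in K_r$ and we conclude that $K_r$ is closed.

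I do not anticipate any real obstacle here: the only subtle point is the singularity of $1/t$ at the origin, but since we need only an inequality (not equality) of Dini integrals in the limit, and since the integrands are nonnegative, Fatou's lemma handles the singularity automatically without any need for dominated convergence or a separate estimate near $0$.
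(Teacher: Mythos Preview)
Your proof is correct and follows the same strategy as the paper. The only cosmetic difference is that you invoke Fatou's lemma to pass the Dini bound to the limit, whereas the paper truncates to $[\epsilon,1]$, uses uniform convergence of $\omega_n(t)/t$ there to interchange limit and integral, and then lets $\epsilon\to 0$; your explicit verification that $\omega\in K$ is redundant (since $K$ is the ambient space and the limit is assumed to lie in it) but harmless.
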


    \begin{proof}
        Let $(\omega_{n})_{n\in\mathbb{N}}$ be a sequence in $K_{r}$ which converges uniformly to a map $\omega\in K$. For every $\epsilon>0$ the sequence ${\omega_{n}(t)}/{t}$ converges uniformly to ${\omega(t)}/{t}$ on $[\epsilon,1]$ and therefore
        \begin{equation}\label{eq:generic1}
            \int_{\epsilon}^{1}\dfrac{\omega(t)}{t}dt=\lim\int_{\epsilon}^{1}\dfrac{\omega_n(t)}{t}dt\leq r.
        \end{equation}
Since \eqref{eq:generic1} holds for every $\epsilon>0$ we deduce that $\omega\in K_{r}$ and hence $K_{r}$ is closed.
    \end{proof}
   
\begin{lemma}
    The spaces $K_{r}$ have empty interior in \( K \). 
\end{lemma}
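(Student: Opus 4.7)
The plan is to prove that $K_r$ has empty interior by a direct density argument: given any $\omega \in K_r$ and any $\epsilon > 0$, I will construct $\tilde\omega \in K \setminus K_r$ with $\|\omega - \tilde\omega\|_{C^0} < \epsilon$. The guiding observation is that the Dini integral $\int_0^1 \omega(t)/t\,dt$ is very sensitive to the behaviour of $\omega$ near $0$, since the weight $1/t$ amplifies whatever happens in a neighbourhood of the origin; by contrast, the $C^0$-norm of such a localised perturbation is controlled by its (small) height. Thus one can inflate the Dini integral arbitrarily at an arbitrarily small uniform cost.

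Concretely, the perturbation I would use is the piecewise linear, tent-like function
\[
\psi_{k,\eta}(t) = \begin{cases} \eta\, k\, t, & t \in [0, 1/k], \\ \eta, & t \in [1/k, 1], \end{cases}
\]
parametrised by $\eta > 0$ and $k \in \mathbb{N}$. It vanishes at $0$, is continuous and non-decreasing, and is concave because its slopes $\eta k$ and $0$ are non-increasing; hence $\psi_{k,\eta} \in K$. Moreover $\|\psi_{k,\eta}\|_{C^0} = \eta$, and a direct computation gives
\[
\int_0^1 \frac{\psi_{k,\eta}(t)}{t}\,dt = \eta + \eta\log k.
\]
Since $K$ is closed under pointwise addition, setting $\eta \coloneqq \epsilon/2$, choosing $k$ large enough that $\eta(1 + \log k) > r$, and letting $\tilde\omega \coloneqq \omega + \psi_{k,\eta}$ produces an element of $K$ whose Dini integral strictly exceeds $r$ (so $\tilde\omega \notin K_r$) and which lies within $\epsilon/2 < \epsilon$ of $\omega$ in the $C^0$-norm. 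This shows that $\omega$ is not interior to $K_r$, and since $\omega \in K_r$ was arbitrary, $K_r$ has empty interior.

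The only place one might expect a subtlety is in ensuring that the perturbed function $\tilde\omega$ remains in $K$, i.e., that adding $\psi_{k,\eta}$ does not destroy concavity or monotonicity; but since $K$ is a convex cone stable under pointwise sums, this reduces to verifying that $\psi_{k,\eta}$ itself lies in $K$, which is evident from its piecewise linear definition. So there is no real obstacle: the entire proof hinges on the elementary quantitative mechanism that concentrating a fixed uniform mass $\eta$ on an interval of length $1/k$ near the origin contributes a $\log k$ term to the Dini integral, which blows up as $k \to \infty$ while the uniform deviation stays fixed at $\eta$.
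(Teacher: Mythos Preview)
Your argument is correct. Both your proof and the paper's use the same mechanism---an additive perturbation in $K$ that is small in $C^0$ but pushes the Dini integral past the threshold---so the approaches are essentially the same in spirit. The one noteworthy difference is in the choice of perturbation: the paper fixes once and for all some $\omega_0 \in K_*$ with $\|\omega_0\|_\infty = 1$ and adds $\epsilon\omega_0$, so the perturbed modulus lands in $K_*$ itself (not merely outside $K_r$), thereby proving the slightly stronger statement that $K_*$ is dense in $K$; your $\psi_{k,\eta}$ is Dini-integrable, so you only exit $K_r$. For the lemma as stated this makes no difference, but the paper's formulation feeds more directly into the subsequent Baire argument. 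A minor cosmetic point: if ``increasing'' in the definition of $K$ is read as strictly increasing, then $\psi_{k,\eta}$ itself is only non-decreasing and hence not literally in $K$; however your sum $\tilde\omega = \omega + \psi_{k,\eta}$ is still strictly increasing (since $\omega$ is), so the conclusion $\tilde\omega \in K$ stands regardless.
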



\begin{proof} We will show that \( K_{*}\) is dense in \( K \), which clearly implies the statement. 
Let $\omega\in K_{\infty}$  and  $\omega_0\in K_{*}$ such that $|\omega_{0}|_{\infty}=1$.  For every $\epsilon$ we have that $\omega_{\epsilon}=\omega+\epsilon\omega_{0}\in K_{*}$  and $|\omega_{\epsilon}-\omega|_{\infty}=\epsilon$. This implies that \( K_{*}\) intersects every open set in $K$ and hence is dense.
    \end{proof}

\begin{proof}[Proof of Proposition \ref{prop:generic}]
Since $K$ is complete in the uniform topology and $K_{\infty}$ is a countable union of closed sets with empty interior we conclude by Baire's category theorem that \( K_{*}\)  is a dense $G_{\delta}$ set. 
\end{proof}

\begin{proof}[Proof of Theorem \ref{thm:generic}] 

By Theorem \ref{thm:dinidist1} for every \(\omega\in K \) there exists an uncountable family $\mathcal{F}_{\omega}$ whose elements have unbounded distortion if and only if $\omega\in K_{*}$. In particular, for every \(\omega\in K \) we have that
\begin{equation}\label{eq:intersection}
\mathcal{F}_{\omega}\cap  E^1_{\omega}(\mathbb{S}^1) \neq \emptyset.
\end{equation}
By the axiom of choice we can select a set $\Gamma\subset E^1 (\mathbb{S}^1)$ consisting of one element of the intersection 
\eqref{eq:intersection} for each \( \omega\in K \). We will show that \( \Gamma\) satisfies the conclusions in the statement of the Theorem.

 Let $\widetilde{K}$ denote the quotient space $K/\simeq$ equipped with the quotient topology and $\pi:K\to\widetilde{K}$ the quotient map. Notice that the equivalence classes preserve Dini-integrability and therefore we have well defined subsets \( \widetilde K_{
 \infty}\) and \( \widetilde K_{*}\) such that \( \pi(K_{\infty})= \widetilde K_{\infty} \) 
and \( \pi(K_{*})= \widetilde K_{*} \). By Proposition \ref{prop:generic} the set \( K_{*}\) is residual in \( K\) in the \( C^{0}\) topology and since the projection \( \pi \) is an open mapping it follows that  \( \widetilde K_{*}\) is residual in \( \widetilde K\) in the quotient topology. 

Finally, let \( \mathcal M: \Gamma \to K \) denote the map which assigns to each \( f\in \Gamma\) the modulus \( \omega_{f'}\in K \). By definition \( \mathcal M  \) is injective and continuous in the \( C^{1+mod}\) topology on \( \Gamma\) and therefore the composition $\pi\circ\mathcal{M}:\Gamma\to \Tilde{K}$ is a homeomorphism. It follows that $(\pi\circ\mathcal{M})^{-1}(\widetilde K_{*})$ is $G_{\delta}$ is residual in $\Gamma$.  
\end{proof}

\printbibliography

\end{document}